\newtheorem{theorem}{Theorem}[section]
\newtheorem{lemma}[theorem]{Lemma}
\newtheorem{conjecture}[theorem]{Conjecture}
\newtheorem{proposition}[theorem]{Proposition}
\theoremstyle{definition}
\newtheorem{remark}[theorem]{Remark}
\theoremstyle{definition}
\newcommand{\dd}{\mbox{\;\rm d}}
\newcommand{\uhalf}{\mathfrak{h}}
\newcommand{\ggl}[2]{\text{GL}(#1,\mathbb{#2})}
\newcommand{\ssl}[2]{\text{SL}(#1,\mathbb{#2})}
\newcommand{\pgl}[2]{\text{PGL}(#1,\mathbb{#2})}
\newcommand{\ssll}[2]{\mathfrak{sl}(#1,\mathbb{#2})}
\newcommand{\oo}[2]{\text{O}(#1,\mathbb{#2})}
\newcommand{\ssu}[1]{\text{SU}(#1)}
\newcommand{\rankin}{L(1,\phi_j,{\rm Ad})}
\newcommand{\rankinv}{L(1,\varphi,\text{Ad})}
\newcommand{\norm}[1]{\parallel{#1}\parallel}
\newcommand{\tth}{^\text{th}}
\newcommand{\weightj}{\frac{h_T(\nu^{(j)})}{\rankin}}
\newcommand{\error}{\mathcal{O}\left((T^2P^{1/2}+T^3P^\theta+P^{5/3})T^{-5+\epsilon}P^\epsilon\right)}
\begin{document}
\title{Weighted Sato-Tate Vertical Distribution of the Satake Parameter of Maass Forms on PGL(N)}
\author{Fan Zhou}
\date{\today}
\maketitle

\begin{abstract}
We formulate a conjectured orthogonality relation between the Fourier coefficients of Maass forms on PGL($N$). Based on the works of Goldfeld-Kontorovich and Blomer for N=3, and on our conjecture for N$\geq$4, we prove a weighted vertical equidistribution theorem (with respect to the generalized Sato-Tate measure) for the Satake parameter of Maass forms at a finite prime. For N=3, the rate of convergence for the equidistribution theorem is obtained.\\
\\
\textbf{Keywords:} Maass form; automorphic form; Kuznetsov trace formula; Sato-Tate measure; Sato-Tate conjecture; Satake parameter; Casselman-Shalika formula; equidistribution; Ramanujan conjecture; orthogonality relation; trace formula.\\
\\
\textbf{Mathematics Subject Classiﬁcation:} Primary 11F55; Secondary 11F72; 11F30.
\end{abstract}

\section{Introduction}
Let $$\varphi(z)=\sum\limits_{n=1}^\infty a_\varphi(n)e^{2\pi i nz}$$ be a holomorphic modular form of weight $k$ for the modular group $\ssl{2}{Z}$. We assume that $\varphi$ is a Hecke eigenfunction with normalization $a_\varphi(1)=1$. The Ramanujan conjecture states $$\left|\frac{a_\varphi(p)}{p^{\frac{k-1}{2}}}\right|\leq 2$$ for any prime number $p$. 
It was proved by Deligne in \cite{weil} as a consequence of his proof of the Weil conjectures. We define a measure on $\mathbb{R}$
$$\dd\mu_{\infty}(x)=\begin{cases}
\frac{1}{\pi}\sqrt{1-\frac{x^2}{4}}\dd x,& \text{when } |x|\leq 2,\\
0,&\text{otherwise,}
\end{cases}$$
called the Sato-Tate measure for GL(2), or the semi-circle measure. The Sato-Tate conjecture is a more refined statement about the statistics of the Hecke eigenvalues, stating that 
if $\varphi$ is a non-CM holomorphic modular form of weight $k\geq 2$, then
${a_\varphi(p)}/{p^{\frac{k-1}{2}}}$ is an equidistributed sequence as $p\to \infty$ with respect to the Sato-Tate measure $\mu_{\infty}$. More precisely the Sato-Tate conjecture predicts that 
$$\lim\limits_{T\to\infty}\frac{\sum\limits\limits_{p\leq T}f\left(\frac{a_\varphi(p)}{p^{\frac{k-1}{2}}}\right)}{\sum\limits\limits_{p\leq T}1}=\int_\mathbb{R}f \dd\mu_{\infty}$$
 for any continuous test function $f:\mathbb{R}\to\mathbb{R}$. In recent years many cases of this conjecture have been solved in 
\cite{bgg} and \cite{bght}.

Considering this problem from the vertical perspective, we can fix the prime number $p$ and investigate the distribution of ${a_\varphi(p)}/{p^{\frac{k-1}{2}}}$ as $\varphi$ varies over different automorphic forms. 
In \cite{sarnak}, it was proved that 
$a_\varphi(p)$ is equidistributed with respect to the $p$-adic Plancherel measure
$$\dd\mu_p(x)= \frac{p+1}{(p^{1/2}+p^{-1/2})^2-x^2} \dd\mu_{\infty}(x)$$
 as $\varphi$ runs over all Hecke-Maass cusp forms for the group $\ssl{2}{Z}$. An effective version of \cite{sarnak} appeared in \cite{lw}.
From the same perspective of fixing $p$ and varying $\varphi$, \cite{cdf} and \cite{serre} proved similar equidistribution theorems for holomorphic modular forms, which also involve the Plancherel measure. 
Very recently \cite{shin} gave a highbrow generalization of \cite{sarnak}, \cite{serre} et al.

It is understandable that by fixing a prime number $p$ instead of an automorphic form $\varphi$ we get the $p$-adic Plancherel measure instead of the Sato-Tate measure. Strikingly, if we give each Hecke eigenvalue $a_\varphi(p)$ a weight 
$$\frac{1}{\underset{s=1}{\text{\rm Res }}L(s,\varphi\times \tilde{\varphi})}=\frac{1}{L(1,\varphi,\text{Ad})}$$ and do the same statistics with fixed $p$ and varying $\varphi$, the same Sato-Tate measure appears once again, instead of the $p$-adic Plancherel measure.  
More interestingly, neither the weight $1/\rankinv$ nor the Sato-Tate measure depends on the choice of the prime number $p$.
In \cite{brugg} it was essentially proved that 
$$\lim\limits_{T\to \infty}
\frac{\sum\limits_{\lambda_\varphi\leq T}\frac{f(a_\varphi(p))}{\rankinv}}
{\sum\limits_{\lambda_\varphi\leq T}\frac{1}{\rankinv}}
=\int_{\mathbb{R}}f \dd\mu_\infty
$$
for any continuous test function $f:\mathbb{R}\to\mathbb{R}$, where $\varphi$ runs over all Hecke-Maass forms for $\ssl{2}{Z}$ and $\lambda_\varphi$ is the Laplace eigenvalue of $\varphi$. Later \cite{gmr} and \cite{li}  proved similar theorems for holomorphic modular forms. The weight ${1}/{\rankinv}$ appears naturally in the Petersson and Kuznetsov trace formulae and that is essential to the proofs.

We generalize theorems of such type to a family of cuspidal automorphic representations of
$\pgl{N}{A}$.
The theory of Maass forms for $\ssl{N}{Z}$ ($N\geq 3$) has been studied since the 1980s. The definitions and results are summarized in \cite{g}. The cuspidal part of $\mathcal{L}^2(\ssl{N}{Z}\setminus\ggl{N}{R}/\oo{N}{R}\cdot\mathbb{R}^\times)$ has a discrete spectrum $\phi_1, \phi_2,...$
with $0<\lambda_1\leq \lambda_2\leq ...$ and $\Delta \phi_j=\lambda_j\phi_j$, where $\Delta$ is the Laplace operator and each $\phi_j$ is a Hecke eigenfunction. 
After adelic lifting, each $\phi_j$ corresponds to an irreducible
un-ramified automorphic representation 
$\pi_j$ 
of $\pgl{N}{A}$.
The global representation $\pi_j$ factorizes into local representations $\otimes_{v\leq \infty}\pi_{j,v}$. 
Each Maass form $\phi_j$ has a spectral parameter $\nu^{(j)}=(\nu^{(j)}_1,...,\nu^{(j)}_{N-1})\in\mathbb{C}^{N-1}$, which determines $\pi_{j,\infty}$. Each $\phi_j$ has Fourier coefficients $A_j(m_1,...,m_{N-1})$ for integers $m_1,...,m_{N-1}$ with normalization $A_j(1,...,1)=1$.

For a finite prime $p$, we have $\pi_{j,p}$ an un-ramified principal series representation of $\text{PGL}(N,\mathbb{Q}_p)$. 
Denote the standard maximal torus of $\ssl{N}{C}$ by $T$, the
Weyl group by $W$, and the standard maximal torus of $\ssu{N}\subset\ssl{N}{C}$ by $T_0$.
The Satake isomorphism sends each $\pi_{j,p}$ to a point $X_j(p)$ in $T/W$, which is called the Satake parameter of $\pi_{j,p}$. The generalized Ramanujan conjecture predicts that $\pi_{j,p}$ is tempered and, equivalently, $X_j(p)$ lies in $T_0/W$, which is a proper subset of $T/W$. We define the generalized Sato-Tate measure on $T_0/W$ by pushforwarding the normalized Haar measure of $\ssu{N}$ to $T_0/W$ that sends an element to its conjugacy class.
Denote the Sato-Tate measure on $T_0/W$ by ${\rm d} x$. Whereas in GL(2)  the Hecke eigenvalue at $p$ is enough to characterize the local factor at $p$, it is false when we move to higher dimensions. We shall investigate the distribution of the Satake parameters $X_j(p)\in T/W$ instead of the Hecke eigenvalues, as in \cite{shin}.

Inspired by previous work on the Kuznetsov trace formula and the Petersson trace formula
such as \cite{blomer}, \cite{brugg}, \cite{gk}, \cite{gmr}, \cite{lw}, \cite{li}, \cite{liknightly}, it is natural
to formulate the following conjecture.

\begin{conjecture}[\textbf{Orthogonality relation}]\label{mainconj}
For each $j=1,2,...$, let $A_j(m_1,...,m_{N-1})$ denote the $(m_1,...,m_{N-1})\tth$ Fourier
coefficient of a Maass form $\phi_j$ for $\ssl{N}{Z}$ with $N\geq2$. 
For each $T\gg 1$, let $h_T$ be any bounded non-negative test function on the spectral parameters which, roughly speaking, essentially counts Maass forms with spectral parameters $|\nu^{(j)}_i|\leq T$. We will elaborate on the definition of $h_T$ later in Conjecture \ref{or1}.
We conjecture that  the following orthogonality relation holds:
\begin{equation}\label{orthogonality}
\lim\limits_{T\to \infty}\frac{\sum\limits_{j=1}^\infty A_j(m_1,,...,m_{N-1})\overline{A_j(n_1,...,n_{N-1})}\weightj}{\sum\limits_{j=1}^\infty \weightj}=\begin{cases}
1, & \text{if }m_i=n_i\text{ for all }i,\\
0, & \text{otherwise}. 
\end{cases}
\end{equation}
\end{conjecture}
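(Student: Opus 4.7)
The natural approach is to invoke the Kuznetsov trace formula for $\ssl{N}{Z}$, which for a suitable test function $h_T$ on the spectral parameters relates the spectral side
\[
\sum_{j=1}^\infty \frac{A_j(m_1,\ldots,m_{N-1})\overline{A_j(n_1,\ldots,n_{N-1})}}{L(1,\phi_j,\mathrm{Ad})}\,h_T(\nu^{(j)}) \;+\; (\text{Eisenstein contributions})
\]
to a geometric side consisting of a ``diagonal'' (identity Weyl element) term plus a sum of Kloosterman-type terms indexed by the non-trivial elements of the Weyl group $W$. The weight $1/L(1,\phi_j,\mathrm{Ad})$ is not an artificial insertion: it is precisely the factor by which the arithmetically normalized Fourier coefficients $A_j$ (with $A_j(1,\ldots,1)=1$) differ from the $L^2$-normalized Whittaker coefficients, via the Rankin--Selberg residue formula. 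This is what makes the Kuznetsov framework the right tool and what will ultimately make the Sato--Tate measure appear in the subsequent applications.

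My plan is as follows. First, I would write out the Kuznetsov formula for the pair $(m_1,\ldots,m_{N-1})$ and $(n_1,\ldots,n_{N-1})$, isolating the identity Weyl term, which contributes a multiple of $\delta_{m=n}\cdot C(h_T)$ where $C(h_T)=\int h_T(\nu)\,d\mu_{\mathrm{Pl}}(\nu)$ for the Plancherel measure on the spectral parameters. Second, I would apply the same formula with $m=n=(1,\ldots,1)$ to the denominator of \eqref{orthogonality}; here the identity term gives exactly $C(h_T)$, with all non-trivial Weyl contributions and Eisenstein terms playing the role of error. Taking the ratio and letting $T\to\infty$ then yields $1$ when $m=n$ and $0$ when $m\neq n$, provided one shows that (a) the Eisenstein contributions are of strictly lower order than $C(h_T)$, and (b) for every non-identity $w\in W$, the $w$-Kloosterman contribution to the numerator is $o(C(h_T))$.

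The third and decisive step is to bound these non-identity Weyl terms. For $N=2$ this is classical, and for $N=3$ it is exactly the content of Goldfeld--Kontorovich and Blomer: one uses the explicit formulas for $\mathrm{GL}(3)$ Kloosterman sums (including the friendly-long-element sum), combined with the cited bounds and a test function $h_T$ chosen to concentrate on $|\nu^{(j)}_i|\le T$, to beat the diagonal. For $N\geq 4$, one would need analogous formulas and estimates for the higher-rank Kloosterman sums attached to each $w\in W$, together with a controlled Bessel/Mellin transform of $h_T$. This is where the main obstacle lies: in general rank the archimedean Bessel transforms are not fully understood, and for the long Weyl element in particular no sharp bound on the attached Kloosterman sum is yet available. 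My proposal therefore treats the $N\geq 4$ case conditionally, isolating the precise estimate on $w$-Kloosterman sums that would suffice to close the argument, and I would choose $h_T$ (as in Conjecture \ref{or1}, once stated) to be a smooth approximation to the characteristic function of $\{|\nu_i|\le T\}$ so that the Plancherel-weighted mass $C(h_T)$ has a clean asymptotic and the Kloosterman transforms can be estimated termwise.

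Finally, before taking the ratio I would also verify that the Eisenstein contributions, which in higher rank split into contributions from every standard parabolic, are each dominated by $C(h_T)$; for $N=3$ this is carried out in Blomer, and for general $N$ it reduces to bounds on lower-rank $L$-functions on the critical line, which are available unconditionally in the ranges needed once the test function is smooth and positive.
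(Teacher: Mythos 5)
The statement you are trying to prove is a \emph{conjecture} in the paper: the author gives no proof of it for general $N$, but only records that it is known for $N=2$ (Bruggeman) and $N=3$ (Goldfeld--Kontorovich, refined in the appendix of Blomer, quoted as Theorem \ref{gk} with a power-saving error term and the Weyl law $\sum_j \omega_j(T)\asymp T^5$), and explicitly states only the expectation that Conjecture \ref{or1} ``can be derived from the Kuznetsov trace formula'' for $N\geq 4$. Your proposal follows exactly this expected route --- spectral side weighted by $1/L(1,\phi_j,\mathrm{Ad})$ via the Rankin--Selberg residue, identity Weyl term giving the diagonal, Eisenstein and non-identity Weyl (Kloosterman) terms to be shown of lower order --- so in approach you are aligned both with the paper's stated prediction and with how the $N=2,3$ cases were actually established in the cited works.

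However, your proposal is not a proof of the statement, and you say so yourself: the decisive step, bounding the Kloosterman contributions for every non-identity Weyl element (in particular the long element) together with controlling the associated archimedean Bessel/Mellin transforms in rank $N-1\geq 3$, is precisely what is not available, and your argument for $N\geq 4$ is explicitly conditional on estimates you only isolate rather than establish. The Eisenstein reduction you sketch for general $N$ (to bounds for lower-rank $L$-functions on the critical line) is also asserted rather than carried out, and in higher rank the continuous spectrum involves residual and non-maximal parabolic contributions whose treatment is not routine. So the honest assessment is: you have correctly reconstructed the strategy by which this conjecture is expected to be (and, for $N\leq 3$, has been) proved, and correctly located the obstruction that keeps it a conjecture for $N\geq 4$; but no part of your text closes that gap, so nothing beyond the already-known cases is actually established --- which is consistent with the status of the statement in the paper itself.
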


Conjecture \ref{mainconj} was proved for $N=2$  in \cite{brugg}, and for $N=3$
in \cite{gk} and \cite{blomer}, where the following stronger result with error term (Theorem \ref{gk}) is obtained. It states 
\begin{multline}\label{gl3or}
\sum\limits_{j=1}^\infty A_j(m_1,m_2)\overline{A_j(n_1,n_2)}\frac{h_{T}(\nu^{(j)})}{\rankin}\\
=\delta_{m_1,n_1}\delta_{m_2,n_2}{\sum\limits_{j=1}^\infty \frac{h_{T}(\nu^{(j)})}{\rankin}}+\mathcal{O} \left((T^2P^{1/2}+T^3P^\theta+P^{5/3})(TP)^\epsilon\right)
\end{multline}
for some $\theta\leq 7/64$ and $P=m_1m_2n_1n_2$.

\begin{theorem}[\textbf{Main theorem}]\label{mainthmCONJ}
Let $\phi_1,\phi_2,...$ be the basis of Maass forms for $\ssl{N}{Z}$. Each $\phi_j$ corresponds to an irreducible un-ramified automorphic representation $\pi_j$ of $\pgl{N}{A}$ with the Satake parameter $X_j(p)\in T/W$ at a finite prime $p$. 
Assume Conjecture \ref{mainconj} if $N\geq 4$.
For any continuous test function $f: T/W\to \mathbb{C}$, we have the equality
\begin{equation}\label{mainequation}
\lim\limits_{T\to \infty}\frac{\sum\limits_{j=1}^\infty f(X_j(p))\weightj}{\sum\limits_{j=1}^\infty \weightj}=\int_{T_0/W}f(x)\dd x.
\end{equation}
\end{theorem}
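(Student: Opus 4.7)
The strategy is to reduce the statement to Conjecture \ref{mainconj} by character theory. Let $\{\chi_\lambda\}$ denote the irreducible characters of $\ssu{N}$, indexed by dominant weights $\lambda$ of $\ssl{N}{C}$ modulo the center. By Weyl's integration formula the Sato-Tate measure $\dd x$ is precisely the pushforward of the normalized Haar measure on $\ssu{N}$ to $T_0/W$, and orthogonality of characters gives $\int_{T_0/W}\chi_\lambda(x)\,\dd x = \delta_{\lambda,0}$. Each $\chi_\lambda$ extends from $T_0/W$ to a Weyl-invariant Laurent polynomial, hence a continuous function, on all of $T/W$.

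The first step is to apply the Casselman-Shalika formula, which identifies
$$\chi_\lambda(X_j(p)) = A_j(p^{k_1},\ldots,p^{k_{N-1}}),$$
where $(k_1,\ldots,k_{N-1})$ are the coordinates of $\lambda$ in the basis of fundamental weights (up to the standard reordering convention). For $\lambda = 0$ one has $\chi_0 \equiv 1$, so both sides of \eqref{mainequation} equal $1$. For $\lambda \ne 0$ the tuple $(p^{k_1},\ldots,p^{k_{N-1}})$ differs from $(1,\ldots,1)$, so applying Conjecture \ref{mainconj} with $(n_1,\ldots,n_{N-1})=(1,\ldots,1)$ and $A_j(1,\ldots,1)=1$ yields
$$\lim_{T\to\infty}\frac{\sum_{j=1}^\infty \chi_\lambda(X_j(p))\weightj}{\sum_{j=1}^\infty \weightj} \;=\; 0 \;=\; \int_{T_0/W}\chi_\lambda(x)\,\dd x.$$
By linearity, \eqref{mainequation} holds whenever $f$ is a finite linear combination of characters.

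The second step promotes this to arbitrary continuous $f$ via Stone-Weierstrass. The Jacquet-Shalika bounds guarantee that for every $j$ and every prime $p$ the Satake parameter $X_j(p)$ lies in a fixed compact subset $K\subset T/W$ independent of $j$ (the locus $p^{-1/2}<|\alpha_i|<p^{1/2}$), and $T_0/W\subset K$. On $K$ the characters $\chi_\lambda$ separate Weyl orbits and are closed under complex conjugation, so by Stone-Weierstrass they are uniformly dense in $C(K)$. Given $\epsilon>0$, pick $g=\sum c_\lambda\chi_\lambda$ with $\|f-g\|_{\infty,K}<\epsilon$. The contribution of $f-g$ to the right-hand side is bounded by $\epsilon$ because $\mathrm{supp}(\dd x)\subset K$; the contribution to the left-hand side is likewise bounded by $\epsilon$ because $X_j(p)\in K$ for all $j$ and all weights are positive. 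The main term $g$ is controlled by the first step, and letting $\epsilon\to 0$ concludes the argument.

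The conceptually subtle point is arranging that Stone-Weierstrass can be applied to $f$ on all of $T/W$ rather than only on $T_0/W$, since \emph{a priori} the Satake parameters are not known to be tempered; this is precisely what the Jacquet-Shalika bounds (or any Ramanujan-type estimate) provide. The deep input, of course, is the orthogonality relation of Conjecture \ref{mainconj}: everything else in the proof is a structural reduction via representation theory of $\ssu{N}$ and the Casselman-Shalika identification of $\chi_\lambda(X_j(p))$ with a Hecke-Fourier coefficient.
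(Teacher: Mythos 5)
Your first step (characters via Casselman--Shalika, weak orthogonality with $(n_1,\ldots,n_{N-1})=(1,\ldots,1)$, Schur orthogonality on $T_0/W$) matches the paper. The gap is in your second step. On the Jacquet--Shalika compact set $K\subset T/W$ the linear span of the characters $\chi_\lambda$ is \emph{not} closed under complex conjugation: a character is a Weyl-invariant Laurent polynomial in the eigenvalues $\alpha_i$, and $\overline{\chi_\lambda(x)}=\chi_{\lambda^*}(x)$ only holds when $|\alpha_i|=1$, i.e.\ on $T_0/W$, since it uses $\overline{\alpha_i}=\alpha_i^{-1}$. On $K$, which has nonempty interior in $T/W$, a uniform limit of such Laurent polynomials is holomorphic in the $\alpha_i$, so the character algebra is certainly not dense in $\mathcal{C}(K)$ (you cannot even approximate $\overline{\chi_1}$). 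Hence the complex Stone--Weierstrass theorem does not apply to your algebra on $K$, and the approximation step collapses precisely in the non-tempered situation you were trying to handle.

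The paper's ``long proof'' repairs exactly this point with two ingredients you omitted. First, one enlarges the algebra to polynomials in $\chi_1,\ldots,\chi_{N-1}$ \emph{and} their complex conjugates, via the embedding $\varrho=(\chi_1,\ldots,\chi_{N-1})$ of $T_1/W$ into $\mathbb{C}^{N-1}$; Stone--Weierstrass in $z_k,\overline{z_k}$ then gives density in $\mathcal{C}(\mathrm{Im}\,\varrho)$. Second, to compute the limit for a monomial containing conjugates one needs the unitaricity relation $\overline{A_j[k]}=A_j[N-k]$ (p.~271 of \cite{g}), equivalently $\overline{\chi_k(X_j(p))}=\chi_{N-k}(X_j(p))$; this does not follow from $X_j(p)\in K$ alone, since the Jacquet--Shalika locus is not the self-dual locus. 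With it, $\prod_k A_j[k]^{i_k}\overline{A_j[k]}^{i'_k}$ becomes a genuine character value $\prod_k\chi_k^{i_k}\chi_{N-k}^{i'_k}(X_j(p))$, which is decomposed into irreducible characters and handled by the weak orthogonality relation (Lemma \ref{mainlemma}), after which your $\epsilon$-argument goes through (Theorem \ref{main2}). Your argument as written only becomes valid if one assumes the Ramanujan conjecture $X_j(p)\in T_0/W$, in which case it reproduces the paper's ``short proof'' (Theorem \ref{main1}); without that assumption the conjugation-closure claim and hence the density claim fail.
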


Our main idea of the proof is to translate the Fourier coefficients $A_j(m_1,...,m_{N-1})$ in Equation \ref{orthogonality} into the characters of finite-dimensional representations of $\ssu{N}$, via the Casselman-Shalika formula. We complete the proof after some computation and an application of the Peter-Weyl theorem (or the Stone-Weierstrass theorem). 

Theorem \ref{mainthmCONJ} essentially proves that the Ramanujan conjecture $X_j(p)\in T_0/W$ holds in average in the vertical sense, i.e., for fixed $p$ and varying $j$. This is because the left side of Equation \ref{mainequation} has $X_j(p)\in T/W$ while the Sato-Tate measure ${\rm d} x$ on the right side of Equation \ref{mainequation} is only supported on $T_0/W$.

\begin{small}
\begin{remark} 
After we submitted the preprint of this paper to the arXiv, we were notified that Theorem \ref{mainthmCONJ} (for the case $N=3$) was independently proved in \cite{bbr}.
\end{remark}
\end{small}

As in \cite{lw}, \cite{ms}, and \cite{shin}, 
we also obtain an effective version of Theorem \ref{mainthmCONJ} for $N=3$, which gives the rate of
convergence, but only for monomial functions.
Its proof is based on the error term in the orthogonality relation (Equation \ref{gl3or}). 

\begin{theorem}[\textbf{Rate of convergence for N=3}]\label{convergi}
Let $\phi_1,\phi_2,...$ be the basis of Maass forms for $\ssl{3}{Z}$.
Let $f: T/W\to \mathbb{C}$ be defined as
$$f\left(\begin{pmatrix}
\alpha_1&&\\
&\alpha_2&\\
&&\alpha_3
\end{pmatrix}\right)=\left(\sum\limits_{i=1}^3\alpha_i\right)^{i_1}\left(\sum\limits_{i=1}^3\overline{\alpha_i}\right)^{i'_1}
\left(\sum\limits_{1\leq i<j\leq 3}\alpha_i\alpha_j\right)^{i_2}\left(\sum\limits_{1\leq i<j\leq 3}\overline{\alpha_i\alpha_j}\right)^{i'_2}$$
for non-negative integers $i_1$, $i'_1$, $i_2$, $i'_2$. 
For $T\gg 1$, let $h_T$ be defined as in Theorem \ref{gk}.
For fixed $\epsilon>0$, we have the asymptotic formula with error term
$$\frac{\sum\limits_{j=1}^\infty f(X_j(p))\frac{h_{T}(\nu^{(j)})}{\rankin}}{\sum\limits_{j=1}^\infty \frac{h_{T}(\nu^{(j)})}{\rankin}}=\int_{T_0/W}f(x)\dd x+\error
$$
as $T\gg 1$, for some $\theta\leq 7/64$ and $P=p^{i_1+i'_1+i_2+i'_2}$.
\end{theorem}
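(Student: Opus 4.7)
The plan is to expand $f(X_j(p))$ via the Casselman-Shalika formula and Hecke multiplicativity as a short $\mathbb{Z}_{\geq 0}$-linear combination of products $A_j(p^{k_1},p^{k_2})\overline{A_j(p^{k'_1},p^{k'_2})}$, apply Theorem \ref{gk} term by term, and identify the main term by Schur orthogonality on $SU(3)$.

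By Casselman-Shalika applied at $p$, we have $A_j(p,1)=\alpha_1+\alpha_2+\alpha_3$ and $A_j(1,p)=\alpha_1\alpha_2+\alpha_1\alpha_3+\alpha_2\alpha_3$ at the Satake parameter, so
$$f(X_j(p))=A_j(p,1)^{i_1}\overline{A_j(p,1)}^{i'_1}A_j(1,p)^{i_2}\overline{A_j(1,p)}^{i'_2}.$$
Hecke multiplicativity for $\ssl{3}{Z}$ (equivalently, the Clebsch-Gordan decomposition of tensor products of irreducible $SU(3)$-representations, since $A_j(p^k,p^\ell)$ equals the Schur polynomial $s_{(k+\ell,\ell,0)}$) produces non-negative integers $\tau(k_1,k_2)$ and $\tau'(k'_1,k'_2)$ with
$$A_j(p,1)^{i_1}A_j(1,p)^{i_2}=\sum_{k_1,k_2}\tau(k_1,k_2)\,A_j(p^{k_1},p^{k_2}),$$
and analogously for $i'_1,i'_2$. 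Because the first row $k_1+k_2$ of the Young diagram $(k_1+k_2,k_2,0)$ is additive under Littlewood-Richardson, only finitely many indices with $k_1+k_2\leq i_1+i_2$ and $k'_1+k'_2\leq i'_1+i'_2$ occur, i.e.\ $O(1)$ terms overall.

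Multiplying the two expansions gives $f(X_j(p))$ as a short double sum, and applying Theorem \ref{gk} to each term gives a main contribution (nonzero only when $k_1=k'_1$ and $k_2=k'_2$) of $\sum_j h_T(\nu^{(j)})/\rankin$, plus an error of size $(T^2P_{k,k'}^{1/2}+T^3P_{k,k'}^\theta+P_{k,k'}^{5/3})(TP_{k,k'})^\epsilon$ with $P_{k,k'}=p^{k_1+k_2+k'_1+k'_2}\leq P$. The main terms assemble with prefactor $\sum_{k_1,k_2}\tau(k_1,k_2)\tau'(k_1,k_2)$, which by Schur orthogonality of the irreducible characters of $SU(3)$ (Peter-Weyl) equals $\int_{T_0/W}f(x)\dd x$. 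Dividing by $\sum_j h_T(\nu^{(j)})/\rankin\asymp T^5$ (Weyl's law for $\ssl{3}{Z}$) yields the claimed asymptotic with error $\error$.

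The main technical obstacle is the bound $k_1+k_2\leq i_1+i_2$ on the Hecke expansion indices: without it, $P_{k,k'}$ could exceed $P$ and the per-term error from Theorem \ref{gk} would not aggregate to the stated form. Fortunately this bound follows immediately from the additivity of first rows of partitions under Littlewood-Richardson, after which the remaining argument---Peter-Weyl identification of the main term and summing $O(1)$ error terms---is routine bookkeeping.
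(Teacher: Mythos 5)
Your argument is correct, and it reaches the stated error term, but it is organized differently from the paper's proof, so a comparison is worth recording. The paper first invokes unitaricity ($\overline{A_j[k]}=A_j[N-k]$, so $\overline{\chi_1(X_j(p))}=\chi_2(X_j(p))$) to collapse $f(X_j(p))$ into the single product $\chi_1^{i_1+i_2'}\chi_2^{i_2+i_1'}(X_j(p))$, decomposes that one character product as $\sum_\mu a_\mu\chi_\mu=\sum_{l_1,l_2}a_{\aleph((l_1,l_2))}A_j(p^{l_1},p^{l_2})$ via Proposition \ref{CS}, and then only needs the ``linear'' case of Theorem \ref{gk} (with $n_1=n_2=1$); the main term is $a_{\mathbf 0}$, and the $p$-dependence of the error is controlled by the weight-multiplicity bound $a_\mu\le\chi_1^{i_1+i_2'}\chi_2^{i_2+i_1'}\big|_{e^\mu}$ together with evaluation of the character at a scalar point, giving $\sum_\mu a_\mu p^{\alpha(l_1+l_2)}\le(p^\alpha+1+p^{-\alpha})^{i_1+i_1'+i_2+i_2'}$. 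You instead keep the conjugated factors, expand the two halves separately, and apply the full bilinear form of Theorem \ref{gk} to terms $A_j(p^{k_1},p^{k_2})\overline{A_j(p^{k_1'},p^{k_2'})}$; your main term $\sum_k\tau(k)\tau'(k)$ equals $\int_{T_0/W}f\,\dd x$ by Schur orthogonality (it also equals the paper's $a_{\mathbf 0}$), and your error bookkeeping replaces the multiplicity computation by the cruder but sufficient observation that there are $O(1)$ terms, each with $P_{k,k'}\le P$ thanks to the first-row subadditivity $k_1+k_2\le i_1+i_2$, $k_1'+k_2'\le i_1'+i_2'$ under Littlewood--Richardson (note it is subadditivity, $\nu_1\le\lambda_1+\mu_1$, and one should also note that reducing the $\mathrm{GL}(3)$ partition modulo columns of height $3$ only decreases the first row, which is all you need). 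What your route buys is that the unitaricity lemma becomes unnecessary and the main term is identified directly by Peter--Weyl; what the paper's route buys is a single application of the weak (one-sided) orthogonality and an explicit, cleaner constant in $p$ from the character evaluation, though both land on the same $\error$ after dividing by the Weyl law $\sum_j h_T(\nu^{(j)})/\rankin\asymp T^5$. One cosmetic slip: with the paper's normalization of the map $\aleph$ in Proposition \ref{CS}, it is $A_j(1,p)=\chi_1(X_j(p))=\sum_i\alpha_i$ and $A_j(p,1)=\chi_2(X_j(p))=\sum_{i<j}\alpha_i\alpha_j$, i.e.\ the opposite of what you wrote; since the final statement and all your bounds are symmetric under swapping the two coordinates, this does not affect the argument, but the labels should be fixed to match the paper's convention.
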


\section{Background on Maass Forms}

Our main reference is \cite{g} for this section. Fix an integer $N\geq 2$. The cuspidal part of $\mathcal{L}^2\left(\ssl{N}{Z}\setminus\ggl{N}{R}/\oo{N}{R}\cdot\mathbb{R}^\times\right)$ has a discrete spectrum $\phi_1, \phi_2,...$ 
with $\Delta \phi_j=\lambda_j\phi_j$ and $0<\lambda_1\leq \lambda_2\leq...$, where $\Delta$ is the Laplace operator and each $\phi_j$ is a Hecke eigenfunction.
Via adelic lifting, each Hecke-Maass form $\phi_j$ corresponds to an irreducible automorphic representation 
$\pi_j=\otimes_{v\leq \infty}\pi_{j,v}$
of $\pgl{N}{A}$. The asymptotic behavior of this discrete spectrum, namely, the Weyl law,  has been studied since Selberg introduced his trace formula.

Let $\uhalf^N$ be the generalized upper half-plane consisting of $z=x\cdot y$, where
$$x=\begin{pmatrix}
1&x_{1,2}&x_{1,3}&\cdots&&x_{1,N}\\
&1&x_{2,3}&\cdots&&x_{2,N}\\
&&\ddots&&&\vdots\\
&&&1&x_{N-2,N-1}&x_{N-2,N}\\
&&&&1&x_{N-1,N}\\
&&&&&1
\end{pmatrix} \;\text{ and }\; y=\begin{pmatrix}
y_1y_2\cdots y_{N-1}&&&&\\
&\ddots&&&\\
&&y_1y_2&&\\
&&&y_1&\\
&&&&1
\end{pmatrix}$$
with $x_{*,*}\in \mathbb{R}$ and $y_*>0$. 
Each Maass form $\phi_j$ is a smooth function in $\mathcal{L}^2(\ssl{N}{Z}\setminus\ggl{N}{R}/\oo{N}{R}\cdot\mathbb{R}^\times)$. By the Iwasawa decomposition $\uhalf^N\simeq
\ggl{N}{R}/\oo{N}{R}\cdot\mathbb{R}^\times,$ we can view $\phi_j$ as a function on $\uhalf^N$ invariant on the left by the action of $\ssl{N}{Z}$.

A Maass form $\phi:\ssl{N}{Z}\setminus\uhalf^N\to\mathbb{C}$ with spectral parameter $\nu=(\nu_1,...,\nu_{N-1})\in\mathbb{C}^{N-1}$ has Fourier-Whittaker expansion $$\phi(z)=\sum\limits_{\gamma\in \text{U}_{N-1}(\mathbb{Z})\setminus\ssl{N-1}{Z}}\sum\limits_{m_1=1}^\infty\cdots \sum\limits_{m_{N-2}=1}^\infty\sum\limits_{m_{N-1}\neq 0}\frac{A(m_1,...,m_{N-1})}{\prod\limits_{k=1}^{N-1}|m_k|^{\frac{k(N-k)}{2}}}
W_{\text{Jacquet}}\left(M\cdot\begin{pmatrix}
\gamma &\\
&1
\end{pmatrix};\nu+\frac{1}{N},\psi_{1,...,1,\frac{m_{N-1}}{|m_{N-1}|}}\right),$$
where $W_\text{Jacquet}$ is Jacquet's Whittaker function,
$(\nu+\frac{1}{N})$ means $(\nu_1+\frac{1}{N},...,\nu_{N-1}+\frac{1}{N})$
and 
$$M=\begin{pmatrix}
m_1...m_{N-2}|m_{N-1}|&&&&\\
&\ddots&&&\\
&&m_1m_2&&\\
&&&m_1&\\
&&&&1
\end{pmatrix}.$$
 We choose to normalize $\phi$ by requiring that $A(1,...,1)=1$.
The number $A(m_1,...,m_{N-1})$ is the $(m_1,...,m_{N-1})\tth$-Fourier coefficient of $\phi$.

We define $$b_{ij}=\begin{cases}
ij, &\text{ if }i+j\leq N,\\
(N-i)(N-j), &\text{ if }i+j>N.\end{cases}$$
For the spectral parameter $\nu=(\nu_1,...,\nu_{N-1})$, we define 
$B_j(\nu)=\sum\limits_{i=1}^{N-1} b_{ij}v_i$.
We define the Langlands parameter $\ell=(\ell_1,...,\ell_{N})\in\mathbb{C}^N$ by
$$\ell_i(\nu)=\begin{cases}
B_{N-1}(\nu),&\text{ if }i=1,\\
B_{N-i}(\nu)-B_{N-i+1}(\nu),&\text{ if }1<i<N,\\
-B_{1}(\nu),&\text{ if }i=N.
\end{cases}$$
A basic fact is that 
$$\lambda(\nu)=\frac{N^3-N}{24}-\frac{1}{2}\sum_{i=1}^N \ell_i^2(\nu)$$ is the Laplace eigenvalue of $\phi$, i.e., $\Delta\phi=\lambda(\nu)\phi$.
For each Maass form, we can find unique $\nu$ to be its spectral parameter such that 
we have $\mathfrak{I}\nu_i\geq 0$ for all $i=1,...,N-1$.

We define the Rankin-Selberg convolution $L$-function for a Hecke-Maass form $\phi$ with its contragredient to be
$$L(s,\phi\times \tilde{\phi})=\zeta(Ns)\sum\limits_{m_1=1}^\infty\cdots\sum\limits_{m_{N-1}=1}^\infty \frac{|A_j(m_1,...,m_{N-1})|^2}{m_1^{{(N-1)}s}\cdots m_{N-1}^s},$$ where $\zeta(s)=\sum\limits_{n=1}^\infty \frac{1}{n^s}$ is the Riemann-Zeta function.
We have 
$$L(s,\phi\times \tilde{\phi})=\zeta(s)L(s,\phi,\text{Ad})\;\text{ and }\;{\underset{s=1}{\text{\rm Res }}L(s,\phi\times \tilde{\phi})}=L(1,\phi,\text{Ad}),$$ where $L(s,\phi,\text{Ad})$ is the $L$-function defined by the adjoint representation of $\ssl{N}{C}$ (the dual group of PGL($N$))
$$\text{Ad}:\ssl{N}{C}\to\ggl{N^2-1}{C}={\rm GL}(\ssll{N}{C}).$$
In the case of GL(2), the adjoint representation of $\ssl{2}{C}$ is the same as the symmetric square representation.

\section{The Satake Parameter and the Sato-Tate Measure}
We give the definitions of the Satake parameters, the Sato-Tate measure and the Ramanujan conjecture for Maass forms on PGL($N$) in this section.
We are happy to refer to \cite{shin} for more general definitions on other groups.

The standard maximal torus of  $\ssl{N}{C}$ is 
$$T=\left\{\begin{pmatrix}
\alpha_{1}&&\\
&\ddots&\\
&&\alpha_{N}
\end{pmatrix}:\alpha_i\in\mathbb{C}^*\text{ for all }i, \prod\limits_{i=1}^N \alpha_i=1\right\}.$$
The group $\ssu{N}$ is the standard maximal compact subgroup of $\ssl{N}{C}$. The standard maximal torus of  $\ssu{N}$ is 
$$T_0=\left\{\begin{pmatrix}
\alpha_{1}&&\\
&\ddots&\\
&&\alpha_{N}
\end{pmatrix}:\alpha_i\in\mathbb{C}^*\text{ and }|\alpha_i|=1\text{ for all }i, \prod\limits_{i=1}^N \alpha_i=1\right\}.$$ 
The Weyl group $W$ is isomorphic to the symmetric group of $N$ elements and acts on $T$ and $T_0$ by permutation of the diagonal entries. The conjugacy classes of $\ssu{N}$ (or $\ssl{N}{C}$) are one-to-one corresponding to elements in $T_0/W$ (or $T/W$). 
The space $T_0/W$ has a natural normalized measure. This measure is the pushforward measure of the normalized Haar measure on $\ssu{N}$ by the map $\ssu{N}\to T_0/W$ sending an element to its conjugacy class. Let us denote this measure on $T_0/W$ by ${\rm d} x$ and we call this measure $\dd x$ on $T_0/W$ the generalized \textbf{Sato-Tate measure}.

For each Hecke-Maass form $\phi_j$, adelic lifting gives a global automorphic representation
$\pi_j=\otimes_{v\leq \infty}\pi_{j,v}$
of $\pgl{N}{A}$. For a finite prime $p$, each $\pi_{j,p}$ is an un-ramified principal series representation of $\text{PGL}(N,\mathbb{Q}_p)$.
By the Satake isomorphism, this un-ramified principal series representation $\pi_{j,p}$ is associated with $N$ nonzero complex numbers $\alpha_{p,1}$, $\alpha_{p,2}$,...,$\alpha_{p,N}$ with $\prod\limits_{i=1}^N\alpha_{p,i}=1$. 
These numbers $\alpha_{p,i}$ determines the representation $\pi_{j,p}$.
We can recover $\pi_{j,p}$ by constructing the space
$$\left\{\text{smooth function }f:\text{PGL}(N,\mathbb{Q}_p)\to\mathbb{C}:f\left(\begin{pmatrix}
t_1&&*\\
&\ddots&\\
&&t_N
\end{pmatrix} g\right)=\left(\prod\limits_{i=1}^N |t_i|_p^{\frac{N+1}{2}-i} \alpha_{p,i}^{\text{ord}_p(t_i)}\right)f(g)\right\}$$ and $\text{PGL}(N,\mathbb{Q}_p)$ acts on $f$ from the right.
The Satake isomorphism sends $\pi_{j,p}$ to a point 
$$X_j(p)=\begin{pmatrix}
\alpha_{p,1}&&\\
&\ddots&\\
&&\alpha_{p,N}
\end{pmatrix}\in T/W.$$
We define this point $X_j(p)$ in $T/W$ as the \textbf{Satake parameter} of this un-ramified principal series representation $\pi_{j,p}$ of $\text{PGL}(N,\mathbb{Q}_p)$.

The generalized Ramanujan conjecture claims that $\pi_{j,p}$ is tempered
when it comes from a Hecke-Maass form $\phi_j$ and, equivalently, the Satake parameter $X_j(p)$ lies in $T_0/W$ which is a proper subspace of $T/W$. More explicitly the Ramanujan conjecture claims that $|\alpha_{p,i}|=1$ for $i=1,2,...,N$. The Ramanujan conjecture has not been proved for Maass forms, even when $N=2$, as of February 2013.

\section{The Root System of Type A}
%The special linear Lie algebra of order $N$, or $\ssll{N}{C}$, is the Lie algebra of $n\times n$ matrices with zero trace and with the Lie bracket $[X,Y]:=XY-YX$. It is a semi-simple complex Lie algebra. %A Cartan subalgebra $\mathfrak{h}$ of $\ssll{N}{C}$ is the subalgebra of diagonal matrices of trace zero. It is spanned by $E_{i,i}-E_{i+1,i+1}$ for $i=1,2,...,N-1$, where $E_{i,j}$ is the matrix with $1$ at the $(i,j)^{\text{th}}$ entry and $0$ elsewhere. 
The Lie group $\ssl{N}{C}$ and its maximal compact subgroup $\ssu{N}$ are associated with the root system of type $A_{N-1}$. We construct the $A_{N-1}$ root system in $$\left\{(x_1,x_2,...,x_N)\in\mathbb{R}^N:\sum\limits_{i=1}^N x_i=0\right\}.$$
Let $\epsilon_i$ be the vector in $\mathbb{R}^N$ with $1$ at the $i^{\text{th}}$ entry and $0$ elsewhere. We have the set of roots $\Phi=\{\epsilon_i-\epsilon_j:i\neq j\}$. %with the root $\epsilon_i-\epsilon_j$ corresponding to $E_{i,j}\in\ssll{N}{C}$
We pick up the set of positive roots $\Phi^+=\{\epsilon_i-\epsilon_j:i< j\}$. We have $(\epsilon_i-\epsilon_{i+1})$ for $i=1,2,...,N-1$ as the simple roots of $\Phi^+$. 
Denote the zero weight by $\textbf{0}=(0,...,0)$.

Let $\Lambda$ be the set of integral weights, which is $\mathbb{Z}$-module generated by $\left( \epsilon_i-\frac{1}{N}{\sum\limits_{j=1}^N \epsilon_j}\right)$ for $i=1,2,...,N-1$.  Let $C\subset \left\{(x_1,x_2,...,x_N)\in\mathbb{R}^N:\sum\limits_{i=1}^N x_i=0\right\}$ be the Weyl chamber associated with the positive roots in $\Phi^+$. Explicitly we have $$C=\left\{\sum\limits_{i=1}^N a_i \epsilon_i %-\frac{1}{N}{\sum\limits_{j=1}^N\epsilon_j}
:a_1\geq a_2\geq ...\geq a_N, a_i\in \mathbb{R}, \sum\limits_{i=1}^N a_i=0\right\}.$$

For a weight $\mu\in\Lambda\cap C$ we define $V_\mu$ as the highest weight representation of $\mu$. 
It can be a representation of $\ssu{N}$ or $\ssl{N}{C}$, by the basic Lie theory.
Moreover, each irreducible finite-dimensional complex linear representation of $\ssu{N}$ or $\ssl{N}{C}$ is associated with such a highest weight in $\Lambda\cap C$. Let $\chi_\mu$ be the character of this representation. The character $\chi_\mu$ is a well-defined function on conjugacy classes, $T/W$ and $T_0/W$. Formally each $\chi_\mu$ is a finite sum of $e^{\eta}$ for $\eta\in\Lambda$ with non-negative integer coefficients, invariant under the action of the Weyl group $W$.

Let $V_1$ be the representation of the standard defining map $\ssl{N}{C}\hookrightarrow\ggl{N}{C}$. This representation corresponds to the highest weight representation of  
$(\epsilon_1-\frac{1}{N}\sum\limits_{j=1}^{N}\epsilon_j)\in\Lambda\cap C$. 
Its character is $$\chi_{\epsilon_1-\frac{1}{N}\sum\limits_{j=1}^{N}\epsilon_j}                  %{\aleph((1,0,...,0))}
\left(\begin{pmatrix}
\alpha_{1}&&\\
&\ddots&\\
&&\alpha_{N}
\end{pmatrix}\right)=\sum\limits_{i=1}^N\alpha_i.$$
Formally we have 
$$\chi_{\epsilon_1-\frac{1}{N}\sum\limits_{j=1}^{N}\epsilon_j}=\sum\limits_{i=1}^N{e^{\epsilon_i-\frac{1}{N}\sum\limits_{j=1}^{N}\epsilon_j}},$$
where $e^{\epsilon_i-\frac{1}{N}\sum\limits_{j=1}^{N}\epsilon_j}$ corresponds to a character of $T$ or $T_0$ 
$$\begin{pmatrix}
\alpha_{1}&&\\
&\ddots&\\
&&\alpha_{N}
\end{pmatrix}\mapsto \alpha_i.$$

Denote the exterior product $\wedge^k V_1$ by $V_k$ for $k=2,...,N-1$ and $V_k$ corresponds to the highest weight representation of $\sum\limits_{i=1}^k\left(\epsilon_i-\frac{1}{N}\sum\limits_{j=1}^{N}\epsilon_j\right)$.
%%%$\aleph((0,...,0,\underset{k\tth\text{ position}}{1},0,...,0))$. 
Denote its character $\chi_{\sum\limits_{i=1}^k\left(\epsilon_i-\frac{1}{N}\sum\limits_{j=1}^{N}\epsilon_j\right)}$ by $\chi_k$ for abbreviation. We have the explicit formula
$$\chi_k\left(\begin{pmatrix}
\alpha_{1}&&\\
&\ddots&\\
&&\alpha_{N}
\end{pmatrix}\right)=\chi_{\sum\limits_{i=1}^k\left(\epsilon_i-\frac{1}{N}\sum\limits_{j=1}^{N}\epsilon_j\right)}\left(\begin{pmatrix}
\alpha_{1}&&\\
&\ddots&\\
&&\alpha_{N}
\end{pmatrix}\right)=\sum\limits_{i_1<i_2<...<i_k}      \alpha_{i_1}\alpha_{i_2}...\alpha_{i_k}.$$ 
These $(N-1)$ representations $V_1,...,V_{N-1}$ are the fundamental representations of $\ssu{N}$ and $\ssl{N}{C}$.
It is obvious that $\chi_1$, $\chi_2$,..., $\chi_{N-1}$ are elementary symmetric polynomials on $T/W$ and $T_0/W$.

%%More generally we have $$\chi_{\aleph((k,0,...,0))}\left(\begin{pmatrix}\alpha_{1}&&\\&\ddots&\\&&\alpha_{N}\end{pmatrix}\right)=\sum\limits_{i_1\leq i_2\leq...\leq i_k}      \alpha_{i_1}\alpha_{i_2}...\alpha_{i_k}.$$

\section{The Casselman-Shalika Formula and the Fourier Coefficients at $p$}
Let $\Omega_N$ be defined as $\{(l_1,...,l_{N-1})\in\mathbb{Z}^{N-1}:l_1,...,l_{N-1}\geq 0\}.$
We define a bijective map 
$$\aleph:\Omega_N\to\Lambda\cap C$$ by taking 
$$(l_1,...,l_{N-1})\mapsto\sum\limits_{i=1}^{N-1}
\left(\sum\limits_{k=1}^{N-i} l_k\right)
\left( \epsilon_i-\frac{1}{N}{\sum\limits_{j=1}^N \epsilon_j}\right).$$

\begin{proposition}[\textbf{Casselman-Shalika}]\label{CS}
Let $\phi_j$ be a Hecke-Maass form for $\ssl{N}{Z}$ with Fourier coefficients $A_j(\cdot,...,\cdot)$. 
Let $X_j(p)$ be its Satake parameter at a finite prime $p$. We have 
$$A_j(p^{l_1},...,p^{l_{N-1}})=\chi_{\aleph((l_1,...,l_{N-1}))}\left(X_j(p)\right)$$
for $l_1,...,l_{N-1}\geq 0$.
\end{proposition}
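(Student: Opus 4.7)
The plan is to reduce the claim to the classical Casselman-Shalika formula applied to the local unramified principal series $\pi_{j,p}$ of $\mathrm{PGL}(N,\mathbb{Q}_p)$. First I would pass from the classical Fourier-Whittaker expansion of $\phi_j$ displayed in Section 2 to its adelic reformulation. Under adelic lifting, the $(m_1,\ldots,m_{N-1})$-th Fourier coefficient factorizes as a product of local Whittaker values; the spherical normalization $A_j(1,\ldots,1)=1$ trivializes every factor at a place not dividing any $m_i$. Specializing $m_i=p^{l_i}$, only the Whittaker function $\mathcal{W}_p$ of $\pi_{j,p}$ survives. Reading off the diagonal matrix $M$ from Section 2 for this choice of arguments, one finds that $A_j(p^{l_1},\ldots,p^{l_{N-1}})$ is given by $\mathcal{W}_p\bigl(\mathrm{diag}(p^{S_1},\ldots,p^{S_{N-1}},1)\bigr)$, where $S_i:=\sum_{k=1}^{N-i}l_k$.

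Next, I would invoke the Casselman-Shalika formula itself. For the unramified principal series of $\mathrm{PGL}(N,\mathbb{Q}_p)$ with Satake parameter $X_j(p)\in T/W$, the formula expresses the normalized spherical Whittaker value at a dominant cocharacter $\mu$ of the diagonal torus as $\delta_B^{1/2}(t_\mu)\cdot \chi_\mu(X_j(p))$, where $\chi_\mu$ is the character of the irreducible $\ssl{N}{C}$-representation of highest weight $\mu$, read off via the Langlands-dual identification of the $\mathrm{PGL}(N)$ cocharacter lattice with the $\ssl{N}{C}$ weight lattice.

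Finally, I would match the cocharacter $(S_1,\ldots,S_{N-1},0)$ to the weight $\aleph((l_1,\ldots,l_{N-1}))$. Dominance is immediate: the hypothesis $l_k\geq 0$ forces $S_1\geq S_2\geq\cdots\geq S_{N-1}\geq 0$. Its image in the weight lattice of $\ssl{N}{C}$, obtained by subtracting the average $\bar S=(\sum_i S_i)/N$, equals
\[
\sum_{i=1}^{N-1} S_i\left(\epsilon_i-\tfrac{1}{N}\sum_{j=1}^N\epsilon_j\right),
\]
which is precisely $\aleph((l_1,\ldots,l_{N-1}))$, yielding the claim.

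The main obstacle is the normalization bookkeeping in the first step: Goldfeld's Fourier coefficient carries the explicit weighting $\prod_k |m_k|^{-k(N-k)/2}$ built into the Fourier-Whittaker expansion, while the local Casselman-Shalika formula produces a compensating modular factor $\delta_B^{1/2}(t_\mu)$. One must verify that these two twists cancel exactly, so that no spurious power of $p$ survives and so that the convention $A_j(1,\ldots,1)=1$ is automatically consistent with the trivial-weight identity $\chi_{\mathbf{0}}(X_j(p))=\dim V_{\mathbf{0}}=1$. Once this cancellation is in hand, the identity follows by direct substitution.
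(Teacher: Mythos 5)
Your proposal is correct and follows essentially the same route as the paper: adelic lifting of $\phi_j$, factorization of the global Whittaker function into local ones (the paper phrases this via multiplicity one), and the local Casselman--Shalika formula for the spherical Whittaker value of $\pi_{j,p}$, with the weight identification $\aleph((l_1,\ldots,l_{N-1}))$ exactly as in the paper. The normalization issue you flag is real but resolves as you expect: $\delta_B^{1/2}$ at $\mathrm{diag}(p^{S_1},\ldots,p^{S_{N-1}},1)$ equals $\prod_k p^{-l_k k(N-k)/2}$, the same factor the paper carries on both sides of the identity before cancelling.
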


\begin{proof}
The Hecke-Maass form $\phi_j$ can be adelically lifted to a cuspidal automorphic form $\Phi_j$ in 
$\mathcal{L}_\text{cusp} ^2\left(\text{Z}(\mathbb{A})\ggl{N}{Q}\setminus\ggl{N}{A}\right)$.
This automorphic form $\Phi_j$ has a unique global Whittaker function $W(*;\Phi_j)$. It has factorization $$W(g;\Phi_j)=\prod\limits_{v\leq \infty}W_v(g_v;\Phi_j).$$ The automorphic form $\Phi_j$ generates an automorphic representation
$\pi_j$ of $\pgl{N}{A}$ which factorizes into $\otimes_{v\leq \infty} \pi_{j,v}$.
With some minor adelic computation, we obtain $$W_p\left(\begin{pmatrix}
p^{l_1+...+l_{N-1}}&&&\\
&\ddots&&\\
&&p^{l_1}&\\
&&&1
\end{pmatrix};\Phi_j\right)=\frac{A_j(p^{l_1},...,p^{l_{N-1}})}{\prod\limits_{k=1}^{N-1}p^{\frac{l_k k(N-k)}{2}}}.$$
The un-ramified principal series $\pi_{j,p}$ of $\text{PGL}(N,\mathbb{Q}_p)$ also has a Whittaker function
$W_p(*;\pi_{j,p})$ and by normalization 
$W_p(1;\pi_{j,p})=1$ we have 
$$W_p\left(\begin{pmatrix}
p^{l_1+...+l_{N-1}}&&&\\
&\ddots&&\\
&&p^{l_1}&\\
&&&1
\end{pmatrix};\pi_{j,p}\right)=\frac{\chi_{\aleph((l_1,...,l_{N-1}))}\left(X_j(p)\right)}{\prod\limits_{k=1}^{N-1}p^{\frac{l_k k(N-k)}{2}}}$$ from \cite{cash}.
By the multiplicity one theorem, we have 
$$W_p(*;\pi_{j,p})=W_p(*;\Phi_j).$$ 
Evaluating the previous equality at $\begin{pmatrix}
p^{l_1+...+l_{N-1}}&&&\\
&\ddots&&\\
&&p^{l_1}&\\
&&&1
\end{pmatrix}$ we prove the theorem.
\end{proof}

\section{The Orthogonality Relation}
Recall that $\phi_1, \phi_2, ...$ are Hecke-Maass forms for $\ssl{N}{Z}$ with increasing Laplace eigenvalues. 
We rewrite Conjecture \ref{mainconj} in detail as Conjecture \ref{or1} and also introduce Conjecture \ref{or2}.

\begin{conjecture}[\textbf{Orthogonality relation}]\label{or1}
For each $T\gg 1$, let $h_T:\{\nu:\mathfrak{R}\ell_i(\nu)\leq 1/2,i=1,...,N\}\to\mathbb{R}$ be any 
non-negative test function satisfying the following conditions:
\begin{itemize}
\item $h_T\asymp 1$ on $$\left\{\nu=(\nu_1,...,\nu_{N-1}):c\leq\mathfrak{I}\nu_i\leq c_iT,
\text{ for }i=1,...,N-1,\text{ and } |\mathfrak{R}\ell_k(\nu)|\leq 1/2 \text{ for }k=1,...,N\right\}$$ for some positive numbers $c, c_1,...,c_{N-1}$;
\item $h_T(\nu)\ll_A |\lambda(\nu/T)|^{-A}$ for all $A>0$.
\end{itemize}
For each Maass form $\phi_j$, let $$\omega_j(T)=\frac{h_T(\nu^{(j)})}{\rankin}$$ be its weight.
We expect
$$\quad\sum\limits_{j=1}^\infty \omega_j(T)\ll_T 1,\quad\;
\sum\limits_{j=1}^\infty A_j(m_1,...,m_{N-1})\overline{A_j(n_1,...,n_{N-1})}\omega_j(T)\ll_T 1
$$ for all positive integers $m_i$ and $n_i$.
We conjecture that  the following orthogonality relation holds:
$$\lim\limits_{T\to \infty}\frac{\sum\limits_{j=1}^\infty A_j(m_1,,...,m_{N-1})\overline{A_j(n_1,...,n_{N-1})}\omega_j(T)}{\sum\limits_{j=1}^\infty \omega_j(T)}=\begin{cases}
1, & \text{if }m_i=n_i\text{ for all }i,\\
0, & \text{otherwise}. 
\end{cases}$$
\end{conjecture}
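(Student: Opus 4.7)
The plan is to attack Conjecture \ref{or1} through the Kuznetsov trace formula for GL($N$), which is the natural spectral-to-arithmetic identity underlying every orthogonality relation of this type. Schematically one expects an identity
\begin{equation*}
\sum_{j} A_j(m_1,\dots,m_{N-1})\overline{A_j(n_1,\dots,n_{N-1})}\,\weightj \;+\; \textrm{(Eisenstein)} \;=\; \Delta(m,n)\,H_T \;+\; \sum_{w\neq 1}\sum_{c} \frac{S_w(m,n;c)}{|c|}\,\mathcal{J}_w(h_T;m,n,c),
\end{equation*}
in which $\Delta(m,n)$ vanishes unless $m_i=n_i$ for all $i$, the quantity $H_T$ matches the denominator $\sum_j\weightj$ up to lower order, the outer sum runs over non-identity Weyl elements $w\in W$, and $S_w,\mathcal{J}_w$ are the corresponding GL($N$) Kloosterman sums and archimedean Bessel-type transforms of $h_T$.

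First I would choose $h_T$ to be a smoothed indicator of the $T$-ball in the spectral parameter domain $\{\mathfrak{R}\ell_i(\nu)\leq 1/2\}$ prescribed in the conjecture; the decay hypothesis $h_T\ll_A|\lambda(\nu/T)|^{-A}$ is tailored both to control the diagonal count at the predicted order and to force strong decay of the Bessel transforms $\mathcal{J}_w$. Second I would verify that, when $m=n$, the diagonal term $\Delta(m,n)H_T$ agrees with $\sum_j\weightj$ up to a lower-order error by invoking the Weyl law for $\ssl{N}{Z}$ together with the standard lower bound $\rankin\gg T^{-\epsilon}$, and that when $(m_1,\dots,m_{N-1})\neq(n_1,\dots,n_{N-1})$ the diagonal piece vanishes identically. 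Third I would dispose of the continuous spectrum contribution using convexity for Rankin-Selberg $L$-values at $s=1$ and standard bounds on Fourier coefficients of residual and minimal-parabolic Eisenstein series, obtaining $o(H_T)$ for any $(m,n)$ of polynomial size in $T$.

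The crux, and the main obstacle, is the fourth step: a uniform estimate for the full Kloosterman-Bessel sum on the right-hand side. For $N=2$ this reduces to Weil's bound plus classical $J$- and $K$-Bessel asymptotics, and for $N=3$ it is exactly what Blomer \cite{blomer} and Goldfeld-Kontorovich \cite{gk} carry out, producing the explicit error term in \Ref{gl3or}. For $N\geq 4$ one must simultaneously control $(N!-1)$ families of Kloosterman sums --- one for each non-trivial Weyl element, with increasingly intricate cell decompositions of the Bruhat stratification --- and pair these with oscillatory archimedean Bessel transforms $\mathcal{J}_w$ whose stationary-phase asymptotics and uniform size in $(m,n,c,T)$ are not yet available in the required generality. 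My sketch therefore reduces Conjecture \ref{or1} precisely to (a) power-saving bounds for each $S_w$ uniform in $(m,n)$, and (b) a favorable oscillatory analysis of each $\mathcal{J}_w$; this is exactly the pair of inputs currently missing for $N\geq 4$, and the reason the orthogonality relation must be \emph{assumed} rather than proved at the level of generality of Theorem \ref{mainthmCONJ}.
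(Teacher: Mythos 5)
The statement you were asked to prove is a \emph{conjecture}: the paper offers no proof of Conjecture \ref{or1}, only the remark that it should be derivable from a GL($N$) Kuznetsov trace formula, that the cases $N=2$ and $N=3$ are established in \cite{brugg} and in \cite{gk}, \cite{blomer} (the latter recorded as Theorem \ref{gk} with an explicit error term), and that it remains open for $N\geq 4$. Your sketch follows exactly this expected route and is candid that the decisive inputs --- uniform power-saving bounds for the GL($N$) Kloosterman sums attached to the nontrivial Weyl elements, and the uniform oscillatory analysis of the associated Bessel-type transforms of $h_T$ --- are not available for $N\geq 4$; so your proposal is consistent with the paper's own stance but, like the paper, it does not constitute a proof, and it should be presented as a reduction/strategy rather than as a demonstration of the conjecture.
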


In actual application, it is often more useful to construct the family of test functions $h_T$ explicitly. One important example of $h_T$ is $$h_T(\nu)=\exp\left(-\frac{\lambda(\nu)}{T^2}\right).$$ Another example of $h_T$ is $$h_T(\nu)=\begin{cases}
1,&\text{ if }\lambda(\nu)\leq T^2\\
0,&\text{ otherwise,}
\end{cases}$$
which corresponds to the classical Weyl's law.

\begin{conjecture}[\textbf{Weak orthogonality relation}]\label{or2}
Under the same assumption for $h_T$ and $\omega_j(T)$ as in Conjecture \ref{or1}, 
we conjecture that the following weak orthogonality relation holds:
$$\lim\limits_{T\to \infty}\frac{\sum\limits_{j=1}^\infty A_j(m_1,...,m_{N-1})\omega_j(T)}{\sum\limits_{j=1}^\infty \omega_j(T)}=\begin{cases}
1, & \text{if }m_1=m_2=...=m_{N-1}=1,\\
0, & \text{otherwise}. 
\end{cases}$$
\end{conjecture}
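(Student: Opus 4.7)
My plan is to deduce Conjecture \ref{or2} as an immediate specialization of Conjecture \ref{or1}. I would substitute $n_1 = n_2 = \cdots = n_{N-1} = 1$ into the strong relation. Because each Maass form is normalized by $A_j(1,\ldots,1)=1$, the factor $\overline{A_j(n_1,\ldots,n_{N-1})}$ in the numerator of Conjecture \ref{or1} becomes identically $1$, and that numerator collapses to exactly $\sum_{j=1}^\infty A_j(m_1,\ldots,m_{N-1})\omega_j(T)$, the quantity appearing in Conjecture \ref{or2}. Likewise the Kronecker condition ``$m_i=n_i$ for all $i$'' on the right-hand side specializes to ``$m_1=\cdots=m_{N-1}=1$,'' matching the case distinction of Conjecture \ref{or2}.

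The convergence hypotheses already built into Conjecture \ref{or1}, namely that both $\sum_j \omega_j(T)$ and $\sum_j A_j(m_1,\ldots,m_{N-1})\overline{A_j(n_1,\ldots,n_{N-1})}\omega_j(T)$ are $\ll_T 1$, guarantee absolute convergence of the ratio appearing in Conjecture \ref{or2} for each fixed $T$, so the limit is well defined and takes the asserted value without further analysis. In this sense Conjecture \ref{or2} is a formal corollary rather than an independent assertion, and I would present it as such.

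If one instead wished to prove Conjecture \ref{or2} directly, without invoking the full strength of Conjecture \ref{or1}, the natural approach is a one-sided Kuznetsov-type trace formula on PGL($N$): rewrite $\sum_{j} A_j(m_1,\ldots,m_{N-1})\omega_j(T)$ as a diagonal main term (nonvanishing only when all $m_i=1$) plus a geometric side indexed by Weyl elements, built from higher-rank Kloosterman sums against Whittaker transforms of $h_T$. The hard part, and the reason this is posed conjecturally for $N\geq 4$, is controlling that geometric side: one needs the Kloosterman contribution to be of smaller order than the diagonal after dividing by $\sum_j \omega_j(T)$, uniformly in $(m_1,\ldots,m_{N-1})$. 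Such bounds are currently known only for $N\leq 3$ via \cite{gk} and \cite{blomer}; for higher rank both the relevant Kuznetsov formula and the requisite estimates on higher-rank Kloosterman sums remain open, which is precisely the obstruction that forces Conjecture \ref{or2} to remain a conjecture in the general case.
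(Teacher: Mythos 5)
Your deduction is exactly the paper's own observation: the statement is posed as a conjecture, and the paper justifies its relation to Conjecture \ref{or1} with the same one-line specialization $n_1=\cdots=n_{N-1}=1$ using the normalization $A_j(1,\ldots,1)=1$. So your proposal is correct and takes essentially the same route, with your trace-formula remarks matching the paper's discussion of why the statement remains conjectural for $N\geq 4$.
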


Obviously Conjecture \ref{or1} implies Conjecture \ref{or2} because of the normalization $A_j(1,...,1)=1$. 
By applying the Casselman-Shalika formula or the Hecke relations, one can prove the inverse is also true. Hence Conjecture \ref{or1} and Conjecture \ref{or2} are equivalent.

We predict that Conjecture \ref{or1} can be derived from the Kuznetsov trace formula. For $N=2$, Proposition 4.1 of \cite{brugg} gives a version of Conjecture \ref{or1} and numerous similar identities are obtained for various cases on GL(2). The works of  \cite{gk} and \cite{blomer} establish versions of Conjecture \ref{or1} for $N=3$.  For $N\geq 4$, this conjecture is still open. We shall emphasize that the weight $$\frac{1}{\rankin}=\frac{1}{L(1,\phi_j,{\text{Ad}})}$$ is crucial for the orthogonality relation to hold. 
We can see this difference most clearly in \cite{lw}. Lemma 3.3 \cite{lw} is un-weighted and there is no orthogonality, whereas Lemma 3.1 \cite{lw} is weighted with ${1}/\rankin$ and orthogonality holds.

There are numerous applications of the orthogonality relation.
The orthogonality relations with error terms for $N=2,3$ are applied  to studying the symmetry types of the low-lying zeroes of families of $L$-functions in \cite{am}, \cite{six}, \cite{guloglu}, and \cite{gk}.
For $N=2$, it is also applied to Sato-Tate distribution of Hecke eigenvalues in \cite{brugg}, \cite{li}, and \cite{liknightly} and to $p$-adic Plancherel distribution of Hecke eigenvalues in \cite{lw}. We extend the application to Sato-Tate distribution further to $N\geq 3$ in Theorem \ref{main1} and \ref{main2}. The following orthogonality relation for $N=3$ is obtained in \cite{gk} and improved to the current version in the appendix of \cite{blomer}.

\begin{theorem}[\textbf{Orthogonality relation for $N=3$}]
\label{gk}
Let $m_1$, $m_2$, $n_1$, $n_2$ be positive integers and let $P=m_1m_2n_1n_2$.
Let $\theta\leq 7/64$ be a bound towards the Ramanujan conjecture on GL(2).
For $T\gg 1$, we define 
$$\omega_j(T)=\frac{h_{T}(\nu^{(j)})}{\rankin}.$$
Here $h_T$ is non-negative, uniformly bounded on 
$\{|\mathfrak{R}\nu_1|\leq 1/2\}\times\{|\mathfrak{R}\nu_2|\leq 1/2\}$, with
$h_T\asymp 1$ on $\{(\nu_1,\nu_2):c\leq \mathfrak{I}\nu_1,\mathfrak{I}\nu_2\leq T, |\mathfrak{R}\nu_1|,|\mathfrak{R}\nu_2|\leq 1/2\}$ for some absolute constant $c>0$, and $h_T(\nu_1,\nu_2)\ll_A((1+|\nu_1|/T)(1+|\nu_2|/T))^{-A}$.
We have
$$\sum\limits_{j=1}^\infty A_j(m_1,m_2)\overline{A_j(n_1,n_2)}\omega_j(T)=\begin{cases}
\sum\limits_{j=1}^\infty \omega_j(T)+
\mathcal{O} \left((T^2P^{1/2}+T^3P^\theta+P^{5/3})(TP)^\epsilon\right), 
%%&\text{if }m_1=n_1\text{ and }m_2=n_2,\\
&\text{if }\underset{\displaystyle m_2=n_2,}{m_1=n_1}\\
\mathcal{O} \left((T^2P^{1/2}+T^3P^\theta+P^{5/3})(TP)^\epsilon\right), & \text{otherwise}.
\end{cases}$$
Additionally we have the Weyl's law $$\sum\limits_{j=1}^\infty \omega_j(T)\asymp T^{5}.$$
\end{theorem}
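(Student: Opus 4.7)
The plan is to derive this from the $GL(3)$ Kuznetsov trace formula, following the strategy of Goldfeld--Kontorovich and the refinement by Blomer. First I would write the Kuznetsov formula as an identity
$$\sum_{j=1}^\infty A_j(m_1,m_2)\overline{A_j(n_1,n_2)}\omega_j(T) \;+\; \mathcal{E}(m,n;h_T) \;=\; \Delta(m,n;h_T) \;+\; \sum_{w\neq e}\,\sum_{c_1,c_2} \frac{S_w(m,n;c)}{c_1 c_2}\,J_w(h_T;m,n;c),$$
where $\mathcal{E}$ collects the minimal and maximal parabolic Eisenstein contributions, $\Delta$ is the diagonal term (proportional to $\delta_{m_1=n_1}\delta_{m_2=n_2}\sum_j \omega_j(T)$), the sum is over the five non-identity Weyl elements of $S_3$, and $J_w$ is an integral transform of $h_T$ against an $A_2$ Bessel kernel.

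Next I would verify the Weyl law $\sum_j \omega_j(T) \asymp T^5$: the Plancherel volume of the tempered spectral parameters bounded by $T$ is of size $T^5$, the residual spectrum contributes lower order, and the weight $1/L(1,\phi_j,\mathrm{Ad})$ is exactly what the Kuznetsov formula produces, so positivity of $h_T$ on a set of measure $\gg T^5$ (by the condition $h_T \asymp 1$ on the tempered box) gives the lower bound, while the rapid-decay condition $h_T(\nu)\ll_A |\lambda(\nu/T)|^{-A}$ gives the matching upper bound. The main term and the analysis of $\Delta(m,n;h_T)$ reduce to this volume computation.

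The bulk of the work is to bound $\mathcal{E}$ and the Kloosterman contributions by $\mathcal{O}((T^2 P^{1/2}+T^3 P^\theta+P^{5/3})(TP)^\epsilon)$. For the Eisenstein term, the minimal parabolic contribution is estimated using Stirling and convexity for $\zeta(1+it)^{-1}$, while the maximal parabolic contribution involves $\sum_\varphi A_\varphi(n)\overline{A_\varphi(m)}/L(1,\mathrm{sym}^2\varphi)$ for $GL(2)$ Maass forms; this is where the Kim--Sarnak bound $\theta \leq 7/64$ enters to give the $T^3 P^\theta$ term. For each Weyl element $w$, I would estimate $J_w$ using stationary phase on the Mellin--Barnes representation of the long-element $A_2$ Bessel kernel, obtaining decay in $c_1, c_2$ once $|c|$ exceeds the natural $T$-dependent scale, and for small $c$ use the trivial bound; then combine with Weil-type bounds for the hyper-Kloosterman sums (attached to the long element) and the classical Kloosterman sum bounds (for the two intermediate cells) to sum over $c$. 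The long-element contribution governs the $T^2 P^{1/2}$ and $P^{5/3}$ ranges, while the intermediate cells feed into the $T^3 P^\theta$ term.

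The hard part will be the uniform analysis of the $A_2$ Bessel transforms $J_w(h_T;m,n;c)$ across the transitional range where $c$ crosses the stationary-phase threshold; obtaining sharp cutoffs with the correct exponents in $T$, $m$, $n$ simultaneously is delicate, and it is precisely here that Blomer's appendix improves on Goldfeld--Kontorovich. A secondary difficulty is showing that the contribution of the continuous spectrum can be matched against the diagonal term without inflating the error beyond $(T^2 P^{1/2}+T^3 P^\theta+P^{5/3})(TP)^\epsilon$; this forces one to estimate mean values of $L$-functions on the critical line rather than invoke pointwise subconvexity.
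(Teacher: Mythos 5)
The paper offers no independent proof of this theorem: its ``proof'' is a citation to the appendix of Blomer (building on Goldfeld--Kontorovich), together with the remark that Blomer's normalization by $\Vert\phi_j\Vert^2$ is converted into the weight $1/L(1,\phi_j,\mathrm{Ad})$ via $\Vert\phi_j\Vert^2\asymp L(1,\phi_j,\mathrm{Ad})$, the ratio being a gamma factor depending only on $\nu^{(j)}$ that can be absorbed into $h_T$. Your outline is essentially a faithful sketch of that cited argument (GL(3) Kuznetsov formula, diagonal/Eisenstein/Kloosterman decomposition, Weyl law from the $T^5$ Plancherel volume, Kim--Sarnak input in the maximal parabolic Eisenstein term), so it takes the same route as the paper's source; the only minor slips are that only the two intermediate Weyl elements and the long element contribute nonvanishing Kloosterman terms (not all five non-identity elements of $S_3$), and that you omit the norm-to-$L(1,\mathrm{Ad})$ conversion just mentioned, neither of which changes the strategy.
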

\begin{proof}
See the appendix of \cite{blomer}. Be reminded that in the context of \cite{blomer} we have 
$||\phi_j||^2\asymp \rankin$ (Lemma 1 \cite{blomer}) and the quotient 
$||\phi_j||^2/ \rankin$ is a gamma factor that only depends on the spectral parameter $\nu^{(j)}$.
\end{proof}

\section{A Short Proof of the Main Theorem Under the Assumption of the Ramanujan Conjecture}

Let us assume the Ramanujan conjecture at a finite prime $p$ which states that the Satake parameter $X_j(p)$ of a Hecke-Maass form $\phi_j$ has the property $$X_j(p)\in T_0/W.$$ Let $\mathcal{C}(T_0/W)$ be the space of complex-valued continuous functions on $T_0/W$. It is a Banach space under the supremum norm $\norm{f}_\infty=\underset{x\in T_0/W}{\text{sup}} |f(x)|$.
Let us recall that $\Lambda \cap C$ is the set of positive weights for the root system $A_{N-1}$.
Any character $\chi_\mu$ lies in $\mathcal{C}(T_0/W)$ for $\mu\in\Lambda\cap C$.
We define the linear subspace spanned by characters $$\mathcal{B}=\left\{\sum\limits_{\mu\in \Lambda\cap C} a_\mu \chi_\mu: a_\mu\in \mathbb{C}, a_\mu=0\text{ for all but finitely many }\mu\right\}.$$ 

\begin{theorem}[\textbf{Peter-Weyl}]\label{peterweyl}
The subspace $\mathcal{B}$ is dense in $\mathcal{C}(T_0/W)$, under the topology of the supremum norm.
\end{theorem}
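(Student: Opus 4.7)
The plan is to invoke the Stone-Weierstrass theorem on the compact Hausdorff space $T_0/W$. The task then reduces to verifying that $\mathcal{B}$ is a unital subalgebra of $\mathcal{C}(T_0/W)$ that is closed under complex conjugation and separates points.

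First I would verify the three algebraic properties. The subspace $\mathcal{B}$ contains the constant function $1$ since the zero weight $\mathbf{0}\in\Lambda\cap C$ is the highest weight of the trivial representation, whose character is $1$. For multiplicative closure I would invoke the complete reducibility of representations of $\ssu{N}$: each tensor product $V_\mu\otimes V_\nu$ decomposes into a finite direct sum of irreducibles $\bigoplus_\lambda m^{\mu\nu}_\lambda V_\lambda$, which translates into $\chi_\mu\chi_\nu=\sum_\lambda m^{\mu\nu}_\lambda \chi_\lambda$ being a nonnegative integer combination of characters $\chi_\lambda$ with $\lambda\in\Lambda\cap C$. For closure under conjugation, every element of $T_0$ is unitary, so $\overline{\chi_\mu(t)}=\chi_\mu(t^{-1})$, which is the character of the dual representation $V_\mu^*$; its highest weight lies again in $\Lambda\cap C$, so $\overline{\chi_\mu}\in\mathcal{B}$.

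The only step with any real content is point separation, which I would handle by appealing directly to the explicit formula already established in Section 4. On $\mathrm{diag}(\alpha_1,\ldots,\alpha_N)\in T_0$, the fundamental characters $\chi_1,\ldots,\chi_{N-1}$ take the values of the elementary symmetric polynomials $e_1,\ldots,e_{N-1}$ in $\alpha_1,\ldots,\alpha_N$. Combined with the $\ssl{N}{C}$-constraint $e_N=\prod_i\alpha_i=1$, these values determine the coefficients of the monic polynomial $\prod_i(x-\alpha_i)$, and hence the unordered tuple $\{\alpha_1,\ldots,\alpha_N\}$, i.e.\ the Weyl orbit. Thus the fundamental characters alone already separate points of $T_0/W$, and Stone-Weierstrass yields the density of $\mathcal{B}$ in $\mathcal{C}(T_0/W)$. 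The main obstacle amounts to nothing more than this last check; everything else is formal bookkeeping, and one could alternatively deduce the same conclusion from the classical Peter-Weyl theorem on $\ssu{N}$ by restricting matrix coefficients to class functions.
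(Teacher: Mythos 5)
Your proposal is correct, but it proves the statement rather than citing it: the paper's entire ``proof'' is a pointer to the literature (Bump, p.~23, and Br\"ocker--tom Dieck, p.~134), i.e.\ to the general fact that for a compact group the linear span of irreducible characters is uniformly dense in the continuous class functions --- a sup-norm companion of the $L^2$ Peter--Weyl theorem, valid for any compact group with no type-$A$ input. Your route instead runs Stone--Weierstrass directly on the compact Hausdorff space $T_0/W$, and all the verifications you list are sound: $\chi_{\mathbf{0}}=1$ gives the unit; complete reducibility of $V_\mu\otimes V_\nu$ gives closure of $\mathcal{B}$ under products; unitarity gives $\overline{\chi_\mu}=\chi_{V_\mu^*}$, and the dual highest weight $-w_0\mu$ is again dominant, so $\overline{\chi_\mu}\in\mathcal{B}$; and point separation follows because $\chi_1,\dots,\chi_{N-1}$ restrict to the elementary symmetric functions $e_1,\dots,e_{N-1}$ of the eigenvalues, which together with $e_N=\prod_i\alpha_i=1$ recover the characteristic polynomial and hence the $W$-orbit. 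What your approach buys is a short self-contained argument tailored to $\mathrm{SU}(N)$, and it is in fact the same mechanism the paper itself deploys later (Section 8) when it applies Stone--Weierstrass on $\mathrm{Im}\,\varrho$ via the map built from $\chi_1,\dots,\chi_{N-1}$; what the paper's citation buys is generality (no explicit symmetric-function computation, works verbatim for any compact connected group) at the cost of being a black box. Your closing remark --- deducing the statement from classical Peter--Weyl by restricting to class functions --- is essentially the content of the cited references, modulo the standard identification of $\mathcal{C}(T_0/W)$ with continuous class functions on $\mathrm{SU}(N)$ (every conjugacy class meets $T_0$, and $T_0$-elements are conjugate iff $W$-equivalent), which is worth stating explicitly if you go that way.
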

\begin{proof}
See p.23 of \cite{bump} and p.134 of \cite{dieck}. 
This is a less known version of Peter-Weyl theorem than its $L^2$-version. 
\end{proof}

\begin{lemma}
Assume Conjecture \ref{or2} if $N\geq 4$.
For any $f\in \mathcal{B}$, we have the equality
$$\lim\limits_{T\to \infty}\frac{\sum\limits_j f(X_j(p))\omega_j(T)}{\sum\limits_j \omega_j(T)}=\int_{T_0/W}f(x)\dd x.$$
\end{lemma}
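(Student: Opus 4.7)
The plan is to exploit linearity in $f$ and reduce to the case $f = \chi_\mu$ for a single dominant weight $\mu \in \Lambda \cap C$. By definition $\mathcal{B}$ is spanned, as a complex vector space, by such characters with only finitely many nonzero coefficients, and both sides of the claimed equality are linear and continuous in $f$ with respect to finite sums. Hence it suffices to establish
\[
\lim_{T\to\infty}\frac{\sum_j \chi_\mu(X_j(p))\,\omega_j(T)}{\sum_j \omega_j(T)} \;=\; \int_{T_0/W} \chi_\mu(x)\dd x
\]
for every $\mu \in \Lambda \cap C$.

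For the left-hand side, the key tool is the Casselman-Shalika formula (Proposition \ref{CS}). Using the bijection $\aleph : \Omega_N \to \Lambda \cap C$, write $\mu = \aleph((l_1, \ldots, l_{N-1}))$ for the unique tuple of non-negative integers $(l_1, \ldots, l_{N-1})$. Then $\chi_\mu(X_j(p)) = A_j(p^{l_1}, \ldots, p^{l_{N-1}})$ for every $j$, so the numerator becomes $\sum_j A_j(p^{l_1}, \ldots, p^{l_{N-1}})\,\omega_j(T)$. The weak orthogonality relation of Conjecture \ref{or2} (a theorem for $N = 2, 3$ via \cite{brugg}, \cite{gk}, \cite{blomer}, and an assumption for $N \geq 4$) then yields the limit as $1$ if $(l_1, \ldots, l_{N-1}) = (0, \ldots, 0)$ and as $0$ otherwise.

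For the right-hand side, recall that $\dd x$ is by construction the pushforward of the normalized Haar measure on $\ssu{N}$ under the map sending an element to its conjugacy class. The standard orthogonality of irreducible characters on the compact group $\ssu{N}$ (one half of the Peter-Weyl theorem) gives $\int_{T_0/W} \chi_\mu(x)\dd x = \delta_{\mu, \mathbf{0}}$, as the integral is the inner product of $\chi_\mu$ with the trivial character $\chi_\mathbf{0} \equiv 1$. Since $\aleph$ is a bijection with $\aleph((0, \ldots, 0)) = \mathbf{0}$, the case $\mu = \mathbf{0}$ on the character side matches the all-ones case on the Fourier-coefficient side, completing the verification. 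No serious obstacle arises: the Casselman-Shalika formula is precisely the bridge that transports the character orthogonality on $\ssu{N}$ to the Fourier-coefficient orthogonality of Conjecture \ref{or2}, and once both sides are expanded character-by-character the equality is immediate term-by-term.
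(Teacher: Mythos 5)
Your proposal is correct and follows essentially the same route as the paper: reduce by linearity to $f=\chi_\mu$, use the Casselman--Shalika formula to identify $\chi_\mu(X_j(p))$ with $A_j(p^{l_1},\ldots,p^{l_{N-1}})$, invoke the (weak) orthogonality relation (Theorem \ref{gk} for $N=3$, Conjecture \ref{or2} for $N\geq 4$) for the spectral side, and Schur orthogonality $\int_{T_0/W}\chi_\mu\dd x=\delta_{\mu,\mathbf{0}}$ for the measure side. The only cosmetic difference is that the paper treats $\mu=\mathbf{0}$ separately as the trivially constant case rather than through the orthogonality relation, which changes nothing.
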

\begin{proof}
We only need to prove for $f=\chi_\mu$ for all $\mu\in \Lambda\cap C$. 
Recall a corollary of the Schur orthogonality relations
$$\int_{T_0/W}\chi_\mu(x)\dd x=\begin{cases}
1, & \text{if }\mu=\textbf{0},\\
0, & \text{otherwise}. 
\end{cases}$$
If $f=\chi_\textbf{0}\equiv 1$ (constant function), we have
$$\frac{\sum\limits_j f(X_j(p))\omega_j(T)}{\sum\limits_j \omega_j(T)}=1=\int_{T_0/W}f(x)\dd x.$$
If $f=\chi_\mu$ for $\mu\neq \textbf{0}$, we have
\begin{eqnarray*}
\lim\limits_{T\to\infty}\frac{\sum\limits_j f(X_j(p))\omega_j(T)}{\sum\limits_j \omega_j(T)}&=&\lim\limits_{T\to\infty}\frac{\sum\limits_j \chi_\mu(X_j(p))\omega_j(T)}{\sum\limits_j \omega_j(T)}\\
&=&\lim\limits_{T\to\infty}\frac{\sum\limits_j A_j(p^{\aleph^{-1}(\mu)}) \omega_j(T)}{\sum\limits_j \omega_j(T)}\\
&=&0\\
&=&\int_{T_0/W}f(x)\dd x,
\end{eqnarray*}
where $A_j(p^{\aleph^{-1}(\mu)})$ means $A_j(p^{l_1},...,p^{l_{N-1}})$ if $\aleph((l_1,...,l_{N-1}))=\mu$.
Because $\aleph$ is bijective, we have $\aleph^{-1}(\mu)\neq (0,...,0)$ with $\mu\neq \textbf{0}$
and $$\lim\limits_{T\to\infty}\frac{\sum\limits_j A_j(p^{\aleph^{-1}(\mu)}) \omega_j(T)}{\sum\limits_j \omega_j(T)}=0$$ from Theorem \ref{gk} ($N=3$) and Conjecture \ref{or2} ($N\geq 4$).
\end{proof}

\begin{theorem}[\textbf{Main theorem I}]\label{main1}
Assume Conjecture \ref{or2} if $N\geq 4$.
Assume the Ramanujan conjecture $X_j(p)\in T_0/W$.
For any continuous test function $f\in \mathcal{C}(T_0/W)$, we have the equality
$$\lim\limits_{T\to \infty}\frac{\sum\limits_{j=1}^\infty f(X_j(p))\omega_j(T)}{\sum\limits_{j=1}^\infty \omega_j(T)}=\int_{T_0/W}f(x)\dd x.$$
\end{theorem}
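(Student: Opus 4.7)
The plan is a standard density argument: the previous lemma handles every $f \in \mathcal{B}$, and Theorem \ref{peterweyl} says $\mathcal{B}$ is sup-norm dense in $\mathcal{C}(T_0/W)$, so a triangle-inequality/$\varepsilon$-approximation should promote the result from $\mathcal{B}$ to all of $\mathcal{C}(T_0/W)$. The Ramanujan assumption is crucial at precisely one point: it guarantees $X_j(p) \in T_0/W$ so that $f(X_j(p))$ and $g(X_j(p))$ make sense simultaneously and differ by at most $\|f-g\|_\infty$ uniformly in $j$.

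More concretely, fix $f \in \mathcal{C}(T_0/W)$ and $\varepsilon > 0$. First I would invoke Theorem \ref{peterweyl} to pick $g \in \mathcal{B}$ with $\|f - g\|_\infty < \varepsilon$. Write
\begin{equation*}
\frac{\sum_j f(X_j(p))\,\omega_j(T)}{\sum_j \omega_j(T)}
= \frac{\sum_j g(X_j(p))\,\omega_j(T)}{\sum_j \omega_j(T)}
+ \frac{\sum_j (f-g)(X_j(p))\,\omega_j(T)}{\sum_j \omega_j(T)},
\end{equation*}
and similarly split $\int_{T_0/W} f\,dx = \int_{T_0/W} g\,dx + \int_{T_0/W} (f-g)\,dx$. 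The previous lemma (applied to $g \in \mathcal{B}$) handles the first summand in the limit $T \to \infty$. For the second summand, the Ramanujan hypothesis ensures $X_j(p) \in T_0/W$, so $|(f-g)(X_j(p))| \leq \|f-g\|_\infty < \varepsilon$; since $\omega_j(T) \geq 0$, the ratio is bounded in absolute value by $\varepsilon$ uniformly in $T$. On the integral side, since $dx$ is a probability measure on $T_0/W$, the tail $|\int_{T_0/W}(f-g)\,dx| \leq \varepsilon$ as well.

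Combining via the triangle inequality then gives
\begin{equation*}
\limsup_{T \to \infty} \left| \frac{\sum_j f(X_j(p))\,\omega_j(T)}{\sum_j \omega_j(T)} - \int_{T_0/W} f(x)\,dx \right| \leq 2\varepsilon.
\end{equation*}
Since $\varepsilon > 0$ was arbitrary, the limsup is zero, which is the claimed identity.

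The only subtle step is verifying that $|(f-g)(X_j(p))| \leq \|f-g\|_\infty$ uniformly in $j$; this is exactly where the Ramanujan conjecture enters, because $f$ is only assumed continuous on $T_0/W$ and without $X_j(p) \in T_0/W$ one cannot even evaluate $f$ there, let alone bound it. I do not anticipate any other obstacle: the existence of the limits for $g$ comes from the previous lemma, which in turn rests on Theorem \ref{gk} for $N=3$ and Conjecture \ref{or2} for $N \geq 4$, and the non-negativity $\omega_j(T) \geq 0$ needed for the uniform bound is built into the hypotheses on $h_T$ in Conjecture \ref{or1}.
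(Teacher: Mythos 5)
Your proposal is correct and follows essentially the same route as the paper: the paper also defines the two normalized averages as linear functionals bounded by $\norm{\cdot}_\infty$ (which, as you note, uses $X_j(p)\in T_0/W$ and $\omega_j(T)\geq 0$ on the spectral side and the fact that $\dd x$ is a probability measure on the integral side), approximates $f$ by elements of $\mathcal{B}$ via Theorem \ref{peterweyl}, and concludes by the same triangle-inequality argument on top of the lemma for $f\in\mathcal{B}$. No gaps; your explicit remark on where the Ramanujan hypothesis enters is exactly the point the paper leaves implicit.
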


\begin{proof}
We have already proved this theorem when $f\in \mathcal{B}$ and $\mathcal{B}$ is a dense subspace of $\mathcal{C}(T_0/W)$.
We need a little bit of analysis to complete the proof.
For $T\gg 1$, we define a linear functional on $\mathcal{C}(T_0/W)$ by $$\mathbb{L}_T(g)=\frac{\sum\limits_j g(X_j(p))\omega_j(T)}{\sum\limits_j \omega_j(T)}$$ for $g\in\mathcal{C}(T_0/W)$. We define another linear functional by 
$$\mathbb{L}_\infty(g)=\int_{T_0/W} g(x)\dd x$$ for $g\in\mathcal{C}(T_0/W)$.
Both $\mathbb{L}_T$ and $\mathbb{L}_\infty$ are continuous under the supremum norm $\norm{\cdot}_\infty$ and we have the inequalities
$$|\mathbb{L}_T(g)|\leq \norm{g}_\infty\text{ and}\quad|\mathbb{L}_\infty(g)|\leq \norm{g}_\infty.$$
By the Peter-Weyl theorem \ref{peterweyl}, any continuous test function $f$ can be approximated under the topology of the supremum norm by functions in $\mathcal{B}$, i.e.,
we can find a sequence of functions $f_n\in \mathcal{B}$, $n=1,2,...$ such that
$$\lim\limits_{n\to \infty}\norm{f-f_n}_\infty=0.$$
For any $\epsilon>0$, we can find $n'$ such that $\norm{f-f_n}_\infty\leq \frac{\epsilon}{3}$ for any $n>n'$. 
Since we already have $$\lim\limits_{T\to\infty}\mathbb{L}_T(f_{n'+1})=\mathbb{L}_\infty(f_{n'+1})$$ from the previous lemma, we can find $T'$ such that for any $T>T'$ we have
$$\left|\mathbb{L}_T(f_{n'+1})-\mathbb{L}_\infty(f_{n'+1})\right|\leq\frac{\epsilon}{3}.$$
For any $T>T'$,
we have 
\begin{eqnarray*}
\left|\mathbb{L}_T(f)-\mathbb{L}_\infty(f)\right|&\leq&\left|\mathbb{L}_T(f)-\mathbb{L}_T(f_{n'+1})\right|+\left|\mathbb{L}_T(f_{n'+1})-\mathbb{L}_\infty(f_{n'+1})\right|+\left|\mathbb{L}_\infty(f_{n'+1})-\mathbb{L}_\infty(f)\right|\\
&\leq& 2\norm{f-f_{n'+1}}_\infty+\frac{\epsilon}{3}\\
&\leq& \epsilon.
\end{eqnarray*}
It follows that the limit $\lim\limits_{T\to\infty} \mathbb{L}_T(f)$ exists
and equals $\mathbb{L}_\infty(f)$.
\end{proof}

\section{A Long Proof of the Main Theorem Without the Assumption of the Ramanujan Conjecture}

In this section, we are going to prove our main theorem without the assumption of the Ramanujan conjecture. Additionally our main theorem will give insight into the Ramanujan conjecture because it will imply a statistical examination of it.

\begin{lemma}
Denote $A_j(1,...,\underset{(N-k)\tth \text{position}}{p},...,1)$ by $A_j[k]$ for abbreviation.
We have $A_j[k]=\overline{A_j[N-k]}$ for $k=1,2,...,N-1$.
\end{lemma}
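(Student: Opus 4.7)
The plan is to deduce the identity directly from the Casselman-Shalika formula (Proposition \ref{CS}) together with the elementary representation-theoretic duality $V_{N-k}\simeq V_k^*$. First, I would translate both $A_j[k]$ and $A_j[N-k]$ into characters of fundamental representations. Setting $l_{N-k}=1$ and all other $l_i=0$ in the definition of $\aleph$ and computing the inner sums $\sum_{k'=1}^{N-i}l_{k'}$, which evaluate to $1$ precisely when $i\leq k$, one finds $\aleph((0,\ldots,0,1,0,\ldots,0))=\sum_{i=1}^{k}\left(\epsilon_i-\frac{1}{N}\sum_{j=1}^N\epsilon_j\right)$, the highest weight of $V_k=\wedge^k V_1$. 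Proposition \ref{CS} therefore gives $A_j[k]=\chi_k(X_j(p))$ and, symmetrically, $A_j[N-k]=\chi_{N-k}(X_j(p))$.

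Next, I would invoke the fact that $\wedge^N V_1$ is the trivial representation of $\ssl{N}{C}$, so that wedging complementary exterior powers produces a canonical isomorphism $V_{N-k}\simeq V_k^*$. Taking characters, this identity becomes $\chi_{N-k}(g)=\chi_k(g^{-1})$ for every $g\in\ssl{N}{C}$, valid on all of $T/W$ without any temperedness hypothesis.

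The final step is to identify complex conjugation with inversion of the Satake parameter. The automorphic representation attached to $\overline{\phi_j}$ is the contragredient $\widetilde{\pi_j}$, whose local component $\widetilde{\pi_{j,p}}$ has Satake parameter $X_j(p)^{-1}$; this is an intrinsic feature of the Satake isomorphism for $\pgl{N}{Q_p}$ and does not rely on the Ramanujan conjecture. Since complex conjugation of $\phi_j$ intertwines with taking the $(m_1,\ldots,m_{N-1})\tth$ Fourier coefficient of $\overline{\phi_j}$, one obtains $\overline{A_j[k]}=\chi_k(X_j(p)^{-1})=\chi_{N-k}(X_j(p))=A_j[N-k]$, as desired.

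The only mildly delicate point I anticipate is articulating, free of any Ramanujan assumption, that the Satake parameter of the contragredient is the inverse of the original Satake parameter, and that complex conjugation of $\phi_j$ corresponds cleanly at the level of Fourier-Whittaker coefficients to passage to the contragredient representation. Both are standard but worth stating explicitly in the write-up. The remainder is a one-line computation with $\aleph$ and the classical duality between complementary exterior powers in type $A_{N-1}$.
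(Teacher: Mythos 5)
Your outline is correct, but it takes a genuinely different (and much longer) route than the paper. The paper's proof is a one-line appeal to unitaricity, citing p.~271 of \cite{g}: the Hecke operators are normal (self-adjoint up to the dual) with respect to the Petersson inner product, which gives $\overline{A_j(m_1,\ldots,m_{N-1})}=A_j(m_{N-1},\ldots,m_1)$ directly, and the lemma is the special case $m_{N-k}=p$. Your argument instead runs through Proposition \ref{CS}, the duality $V_{N-k}\simeq V_k^*$ (equivalently $\chi_{N-k}(g)=\chi_k(g^{-1})$ on $\ssl{N}{C}$, using $\prod_i\alpha_i=1$), and the fact that the contragredient has inverse Satake parameter; your computation of $\aleph$ giving $A_j[k]=\chi_k(X_j(p))$ is correct and in fact reappears verbatim in the proof of Lemma \ref{mainlemma}. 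Two remarks on the comparison. First, your route does not avoid the paper's input: the identification of the representation generated by $\overline{\phi_j}$ with $\widetilde{\pi_j}$ is precisely where unitarity of the cuspidal $\pi_j$ enters, so at bottom both proofs rest on the same fact, yours phrased locally ($\{\overline{\alpha_{p,i}}\}=\{\alpha_{p,i}^{-1}\}$ as multisets), the paper's phrased classically (self-adjointness of Hecke operators). Second, the step you yourself flag --- that the $(p^{l_1},\ldots,p^{l_{N-1}})$ coefficient of $\overline{\phi_j}$, extracted against the \emph{fixed} additive character used in the Whittaker expansion, is $\overline{A_j(p^{l_1},\ldots,p^{l_{N-1}})}$ --- is the only place needing real care, since conjugation replaces $\psi$ by $\psi^{-1}$; it goes through because $\psi^{-1}$ is again unramified at $p$ (so Casselman--Shalika applies unchanged) and the sign issue only affects the last index $m_{N-1}$, which is positive here, but this bookkeeping must be written out. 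What your approach buys is independence from the classical Hecke-operator formalism and a statement purely in terms of Satake parameters; what the paper's citation buys is brevity and a result valid for all indices $(m_1,\ldots,m_{N-1})$ at once, not just prime-power ones.
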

\begin{proof}
This is due to unitaricity.  See p. 271 \cite{g}.
\end{proof}

Denote $\left\{
\begin{pmatrix}
\alpha_{1}&&\\
&\ddots&\\
&&\alpha_{N}
\end{pmatrix}:\alpha_i\in\mathbb{C}^*\text{ and }|\alpha_i|\leq p^{\frac{1}{2}}\text{ for all }i, \prod\limits_{i=1}^N \alpha_i=1, 
%%%%\sigma_l(\alpha_1,...,\alpha_N)=\overline{\sigma_{N-l}(\alpha_1,...,\alpha_N)}\text{ for }l=1,2,...,N-1
\right\}$ by $T_1$.
Recall that $T_0$ and $T$ are tori of $\ssu{N}$ and $\ssl{N}{C}$ respectively. We have $T_0\subset T_1\subset T$. We shall note that $T_1$ is a compact set.

\begin{lemma}
The Satake parameter $X_j(p)$ of a Hecke-Maass form $\phi_j$ lies in $T_1/W$.
\end{lemma}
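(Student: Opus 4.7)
The plan is to unpack the claim $X_j(p) \in T_1/W$ into the concrete statement that $|\alpha_{p,i}| \leq p^{1/2}$ for every $i = 1, \ldots, N$, where $(\alpha_{p,1}, \ldots, \alpha_{p,N})$ are the Satake parameters of $\pi_{j,p}$ subject to $\prod_i \alpha_{p,i} = 1$. This is the classical trivial bound towards the Ramanujan conjecture, weaker than the Jacquet--Shalika strict bound but sufficient here since the lemma only asserts $\leq p^{1/2}$.

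The route I would take is through the Rankin--Selberg convolution $L$-function $L(s,\phi_j\times\tilde{\phi}_j)$ introduced in Section 2. Its local Euler factor at a finite prime $p$ has the standard form
\[
L_p(s,\phi_j\times\tilde{\phi}_j)=\prod_{i,k=1}^{N}\left(1-\alpha_{p,i}\overline{\alpha_{p,k}}\,p^{-s}\right)^{-1}.
\]
By the work of Jacquet, Piatetski-Shapiro, and Shalika, the global $L$-function $L(s,\phi_j\times\tilde{\phi}_j)$ has a simple pole at $s=1$ and is otherwise holomorphic on $\{\mathfrak{R} s \geq 1\}$, and its Euler product converges absolutely for $\mathfrak{R} s > 1$. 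Absolute convergence at each prime $p$ forces each factor $(1-\alpha_{p,i}\overline{\alpha_{p,k}}p^{-s})$ to be nonzero for every real $s>1$, which in turn forces $|\alpha_{p,i}\overline{\alpha_{p,k}}|\,p^{-s}<1$ for every index pair $(i,k)$ and every $s>1$. Letting $s\to 1^{+}$ gives $|\alpha_{p,i}||\alpha_{p,k}|\leq p$, and setting $i=k$ yields $|\alpha_{p,i}|^2\leq p$, so $|\alpha_{p,i}|\leq p^{1/2}$, as required.

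The main obstacle is the analytic input about $L(s,\phi_j\times\tilde{\phi}_j)$, which is a standard but nontrivial part of Rankin--Selberg theory not proved in this paper. A more self-contained alternative is to note that since $\pi_j$ is a cuspidal automorphic representation of $\pgl{N}{A}$, each local constituent $\pi_{j,p}$ is unitary and generic, and then to invoke the classification of unitary unramified generic principal series of $\text{PGL}(N,\mathbb{Q}_p)$, which directly yields the same bound $|\alpha_{p,i}|\leq p^{1/2}$. Either route delivers the compactness-type conclusion $X_j(p)\in T_1/W$ needed for the long proof of the main theorem in the sequel.
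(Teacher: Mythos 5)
Your route coincides with the paper's: the paper disposes of this lemma by citing Jacquet--Shalika, and the content of that citation is precisely the Rankin--Selberg convergence argument you sketch, using the $L(s,\phi_j\times\tilde{\phi}_j)$ already set up in Section 2. Two steps need repair, though. First, the inference ``absolute convergence forces each factor $(1-\alpha_{p,i}\overline{\alpha_{p,k}}\,p^{-s})$ to be nonzero, which in turn forces $|\alpha_{p,i}\overline{\alpha_{p,k}}|\,p^{-s}<1$'' is a non sequitur as written: nonvanishing of $1-z$ only says $z\neq 1$ and gives no bound on $|z|$. The correct deduction uses positivity of coefficients. The $p$-local factor is a power series in $X=p^{-s}$ with nonnegative coefficients (they are the pure $p$-power coefficients of the global Dirichlet series, which are built from $|A_j(p^{l_1},\dots,p^{l_{N-1}})|^2$); since the global series with nonnegative coefficients converges for every real $s>1$, so does this subseries, hence its radius of convergence in $X$ is at least $p^{-1}$. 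On the other hand it agrees near $X=0$ with the rational function $\prod_{i,k}\left(1-\alpha_{p,i}\overline{\alpha_{p,k}}X\right)^{-1}$, which has no zeros and has a genuine pole at $X=|\alpha_{p,i}|^{-2}$ for each $i$; comparing the radius of convergence with the nearest pole gives $|\alpha_{p,i}|^{2}\leq p$, as desired. Second, the contragredient's Satake parameters are $\alpha_{p,k}^{-1}$, and writing the local factor with $\overline{\alpha_{p,k}}$ silently uses unitarity of $\pi_{j,p}$ (the same unitarity invoked in the paper for $A_j[k]=\overline{A_j[N-k]}$); this should be said. The analytic input you flag (convergence of the Rankin--Selberg series for $\mathfrak{R}s>1$) is exactly what the cited Jacquet--Shalika paper supplies, and the non-strict bound $|\alpha_{p,i}|\leq p^{1/2}$ you obtain is all that membership in $T_1/W$ requires; your fallback via the classification of unitary generic unramified principal series is a valid, if heavier, alternative.
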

\begin{proof}
See \cite{jacquet}. This is essentially a bound toward the Ramanujan conjecture at $p$.
\end{proof}
We can replace $p^{\frac{1}{2}}$ with $p^{\frac{1}{2}-\frac{1}{N^2+1}}$ in the definition of $T_1$ and the previous lemma still holds from the work of \cite{lrs}. Neither is particularly necessary because any bound which does not depend on $\phi_j$ is good enough for us.
We define an injective map $\varrho: T_1/W\to \mathbb{C}^{N-1}$ by
$$\begin{pmatrix}
\alpha_{1}&&\\
&\ddots&\\
&&\alpha_{N}
\end{pmatrix}\mapsto \left(\chi_1\left(\begin{pmatrix}
\alpha_{1}&&\\
&\ddots&\\
&&\alpha_{N}
\end{pmatrix}\right),...,\chi_{N-1}\left(\begin{pmatrix}
\alpha_{1}&&\\
&\ddots&\\
&&\alpha_{N}
\end{pmatrix}\right)\right).$$
This is a well-defined map because $\chi_k$ is invariant under the action of the Weyl group $W$. Its image $\text{Im }\varrho$ is a compact set in $\mathbb{C}^{N-1}$. This map establishes the equivalence between  $\mathcal{C}(T_1/W)$ and $\mathcal{C}(\text{Im }\varrho)$ the space of continuous functions on $\text{Im }\varrho$. By the Stone-Weierstrass theorem, polynomials in $z_k$ and $\overline{z_k}$ for $k=1,2,...,N-1$ on $\mathbb{C}^{N-1}=\{(z_1,...,z_{N-1}):z_k\in\mathbb{C})\}$ are dense in $\mathcal{C}(\text{Im }\varrho)$.

\begin{lemma}\label{mainlemma}
Assume Conjecture \ref{or2} if $N\geq 4$.
Let $i_k$ and $i'_k$ be non-negative integers for $k=1,2,...,N-1$. We have
$$\lim\limits_{T\to\infty}\frac{\sum\limits_j \prod\limits_{k=1}^{N-1}A_j[k]^{i_k}{\overline{ A_j[k]}}^{i'_k}\omega_j(T)}{\sum\limits_j \omega_j(T)}=\int_{T_0/W} \prod\limits_{k=1}^{N-1}\chi_k(x)^{i_k}\overline{\chi_k(x)}^{i'_k} \dd x.$$
\end{lemma}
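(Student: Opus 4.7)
The idea is to convert each monomial in Fourier coefficients into a polynomial in characters of fundamental representations of $\ssu{N}$ evaluated at the Satake parameter, decompose it into irreducible characters via the representation ring of $\ssu{N}$, and apply the weak orthogonality relation to each character term.

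First, Proposition \ref{CS} (Casselman--Shalika) gives $A_j[k] = \chi_k(X_j(p))$, since $A_j[k] = A_j(1,\ldots,p,\ldots,1)$ with $p$ in position $N-k$ corresponds under $\aleph$ to the highest weight of $V_k = \wedge^k V_1$. Combined with the preceding lemma $A_j[k] = \overline{A_j[N-k]}$, this yields the conjugation identity $\overline{\chi_k(X_j(p))} = \chi_{N-k}(X_j(p))$ on all of $T_1/W$, so that
$$\prod_{k=1}^{N-1} A_j[k]^{i_k}\overline{A_j[k]}^{i'_k} \;=\; \prod_{k=1}^{N-1} \chi_k(X_j(p))^{a_k}, \qquad a_k := i_k + i'_{N-k}.$$

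Next, in the representation ring of $\ssu{N}$ I would decompose the tensor product of fundamental representations,
$$\bigotimes_{k=1}^{N-1} V_k^{\otimes a_k} \;=\; \bigoplus_{\mu \in \Lambda \cap C} d_\mu\, V_\mu,$$
with non-negative integers $d_\mu$, all but finitely many zero. Taking characters and evaluating at $X_j(p)$, then invoking Casselman--Shalika once more in the form $\chi_\mu(X_j(p)) = A_j(p^{\aleph^{-1}(\mu)})$, rewrites the weighted sum in the numerator of the lemma as the finite combination
$$\sum_{\mu \in \Lambda \cap C} d_\mu \sum_{j} A_j(p^{\aleph^{-1}(\mu)})\,\omega_j(T).$$
Since the outer sum over $\mu$ is finite, one can interchange it with the ratio and the limit $T\to\infty$. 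For every $\mu \ne \textbf{0}$ one has $\aleph^{-1}(\mu) \ne (0,\ldots,0)$, so Theorem \ref{gk} (for $N=3$) or Conjecture \ref{or2} (for $N \ge 4$) kills the corresponding term; only $\mu = \textbf{0}$ survives, contributing $d_{\textbf{0}}$.

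For the right-hand side, on the unitary torus $T_0/W$ the identity $\overline{\chi_k(x)} = \chi_{N-k}(x)$ follows directly from $|\alpha_i| = 1$ together with $\prod_i \alpha_i = 1$, so the integrand again reduces to $\sum_\mu d_\mu\,\chi_\mu$. Schur orthogonality gives $\int_{T_0/W} \chi_\mu \dd x = \delta_{\mu,\textbf{0}}$, and the integral therefore also equals $d_{\textbf{0}}$. The only step that needs care is the conjugation identity $\overline{\chi_k} = \chi_{N-k}$: it must be available at $X_j(p) \in T_1/W$, not merely on the unitary locus $T_0/W$, and this comes from unitaricity of $\pi_j$ (the preceding lemma), independently of any Ramanujan bound.
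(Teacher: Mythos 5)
Your argument is correct and is essentially the paper's own proof: Casselman--Shalika plus the unitaricity identity $A_j[k]=\overline{A_j[N-k]}$ turns the monomial into a product of fundamental characters at $X_j(p)$, the tensor-product decomposition in the representation ring reduces everything to a finite sum of irreducible characters, and the weak orthogonality relation (Theorem \ref{gk} for $N=3$, Conjecture \ref{or2} for $N\geq 4$) together with Schur orthogonality leaves only the trivial-weight multiplicity on both sides. One small wording caveat: the relation $\overline{\chi_k}=\chi_{N-k}$ is not a function identity on all of $T_1/W$, but, as your closing paragraph correctly notes, it holds at the specific points $X_j(p)$ by unitaricity, which is all the proof uses.
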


\begin{proof}
By Proposition \ref{CS}, we have $A_j[k]=\chi_k(X_j(p))$. 
The character of the tensor product representation $\bigotimes\limits_{k=1}^{N-1} \left(V_k^{\otimes i_k}\otimes V_{N-k}^{\otimes i'_k}\right)$ is $\prod\limits_{k=1}^{N-1}\chi_k^{i_k}\chi_{N-k}^{i'_k}$. By the basic Lie theory, any finite-dimensional representation is a direct sum of irreducible representations and we have
$$\bigotimes\limits_{k=1}^{N-1} \left(V_k^{\otimes i_k}\otimes V_k^{\otimes i'_k}\right)=\bigoplus\limits_{\mu\in \Lambda\cap C}V_\mu^{\oplus a_\mu},$$ where $a_\mu$ is the multiplicity of $V_\mu$.
Hence we have the corresponding identity of characters 
$\prod\limits_{k=1}^{N-1}\chi_k^{i_k}\chi_{N-k}^{i'_k}=\sum\limits_{\mu\in \Lambda\cap C} a_\mu \chi_\mu$. We have 

\begin{eqnarray*}
\lim\limits_{T\to\infty}\frac{\sum\limits_j \prod\limits_{k=1}^{N-1}A_j[k]^{i_k}{\overline{ A_j[k]}}^{i'_k}\omega_j(T)}{\sum\limits_j \omega_j(T)}
&=&\lim\limits_{T\to\infty}\frac{\sum\limits_j \prod\limits_{k=1}^{N-1}A_j[k]^{i_k}{A_j[N-k]}^{i'_k}\omega_j(T)}{\sum\limits_j \omega_j(T)}\\
&=&\lim\limits_{T\to\infty}\frac{\sum\limits_j \prod\limits_{k=1}^{N-1}\chi_k(X_j(p))^{i_k}{\chi_{N-k}(X_j(p))}^{i'_k}\omega_j(T)}{\sum\limits_j \omega_j(T)}\\
&=&\lim\limits_{T\to\infty}\frac{\sum\limits_j \sum\limits_\mu a_\mu\chi_\mu(X_j(p))\omega_j(T)}{\sum\limits_j \omega_j(T)}\\
&=&\sum\limits_\mu a_\mu\lim\limits_{T\to\infty}\frac{\sum\limits_j \chi_\mu(X_j(p))\omega_j(T)}{\sum\limits_j \omega_j(T)}\\
&=&a_\textbf{0}.
\end{eqnarray*}
On the other side, we have
\begin{eqnarray*}
\int_{T_0/W} \prod\limits_{k=1}^{N-1}\chi_k(x)^{i_k}\overline{\chi_k(x)}^{i'_k} \dd x
&=&\int_{T_0/W} \prod\limits_{k=1}^{N-1}\chi_k(x)^{i_k}\overline{\chi_k(x)}^{i'_k} \dd x\\
&=&\int_{T_0/W} \sum\limits_\mu a_\mu\chi_\mu(x) \dd x\\
&=&\sum\limits_\mu a_\mu \int_{T_0/W}  \chi_\mu(x) \dd x\\
&=&a_{\textbf{0}}.
\end{eqnarray*}
Hence we establish the identity.
\end{proof}

\begin{theorem}[\textbf{Main theorem II}]\label{main2}
Assume Conjecture \ref{or2} if $N\geq 4$.
For any continuous test function $f: T/W\to \mathbb{C}$ we have the equality
\begin{equation}\label{main2eq}
\lim\limits_{T\to \infty}\frac{\sum\limits_{j=1}^\infty f(X_j(p))\omega_j(T)}{\sum\limits_{j=1}^\infty \omega_j(T)}=\int_{T_0/W}f(x)\dd x.
\end{equation}
\end{theorem}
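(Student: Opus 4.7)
The plan is to follow the same Banach-space density strategy used for Main theorem I, but with two modifications reflecting the removal of the Ramanujan hypothesis. First, the Satake parameter $X_j(p)$ is no longer known to lie in $T_0/W$; the best available statement is $X_j(p) \in T_1/W$, a compact set sitting between $T_0/W$ and $T/W$. Second, the family of characters $\chi_\mu$ for $\mu \in \Lambda \cap C$ is no longer closed under complex conjugation when viewed as functions on $T_1/W$, so the Peter--Weyl theorem used previously must be replaced by the Stone--Weierstrass theorem on $T_1/W$. Since any $f \in \mathcal{C}(T/W)$ restricts to a continuous function on the compact subspace $T_1/W$, the problem reduces to uniform approximation of elements of $\mathcal{C}(T_1/W)$.

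For the dense subalgebra I would use the injective continuous map $\varrho: T_1/W \hookrightarrow \mathbb{C}^{N-1}$ introduced earlier, which identifies $\mathcal{C}(T_1/W)$ with $\mathcal{C}(\text{Im}\,\varrho)$. On the compact set $\text{Im}\,\varrho$ the Stone--Weierstrass theorem guarantees that the $*$-subalgebra generated by the coordinate functions $z_k$ and $\overline{z_k}$ for $k=1,\ldots,N-1$ is uniformly dense. Pulling back along $\varrho$, the subalgebra $\mathcal{B}'$ generated over $\mathbb{C}$ by $\chi_1,\ldots,\chi_{N-1},\overline{\chi_1},\ldots,\overline{\chi_{N-1}}$ is uniformly dense in $\mathcal{C}(T_1/W)$. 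Lemma \ref{mainlemma} already delivers the required asymptotic on the monomial generators of $\mathcal{B}'$, so by $\mathbb{C}$-linearity it holds on all of $\mathcal{B}'$.

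The remaining step is a routine $\epsilon/3$ completion, parallel to the proof of Main theorem I. Define
$$\mathbb{L}_T(g) = \frac{\sum_j g(X_j(p))\,\omega_j(T)}{\sum_j \omega_j(T)}, \qquad \mathbb{L}_\infty(g) = \int_{T_0/W} g(x)\,\dd x,$$
for $g \in \mathcal{C}(T_1/W)$; both are continuous linear functionals of norm at most $1$ with respect to $\norm{\cdot}_\infty$ on $T_1/W$, the first bound using $X_j(p) \in T_1/W$ and the second using the inclusion $T_0 \subset T_1$ together with the normalization $\int_{T_0/W}\dd x = 1$. Given $\mathbb{L}_T(g) \to \mathbb{L}_\infty(g)$ for all $g$ in the dense subspace $\mathcal{B}'$, the triangle inequality argument from the proof of Main theorem I transplants verbatim and yields the conclusion for $g = f|_{T_1/W}$, hence for $f$.

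The conceptual obstacle is the bookkeeping between the three nested sets $T_0 \subset T_1 \subset T$: the weighted average samples $f$ at points of $T_1/W$ while the limiting measure is supported on the strictly smaller $T_0/W$. That the Sato--Tate measure nonetheless appears is a statistical shadow of the Ramanujan conjecture encoded in the weights $1/\rankin$. The computational engine is Lemma \ref{mainlemma}, which converts weighted averages of mixed monomials $\prod_k A_j[k]^{i_k}\,\overline{A_j[k]}^{i'_k}$ into the multiplicity of the trivial representation in a tensor product decomposition --- a purely compact-group invariant of $\ssu{N}$ that only knows about $T_0/W$. Once that lemma is in place, the extension from $\mathcal{B}'$ to $\mathcal{C}(T/W)$ is standard functional analysis, and no difficulty beyond careful choice of the approximating algebra is expected.
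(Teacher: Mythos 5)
Your proposal is correct and follows essentially the same route as the paper: restrict to $T_1/W$, transport to $\mathcal{C}(\text{Im }\varrho)$ via $\varrho$, invoke Lemma \ref{mainlemma} on the monomials $\prod_k z_k^{i_k}\overline{z_k}^{i'_k}$, apply Stone--Weierstrass for density, and finish with the same $\epsilon/3$ functional argument as in Theorem \ref{main1}. Your explicit remarks on the norm bounds of $\mathbb{L}_T$ and $\mathbb{L}_\infty$ over $T_1/W$ and on why Peter--Weyl must be replaced by Stone--Weierstrass are exactly the points the paper leaves implicit.
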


\begin{proof}
The composition $f\circ\varrho^{-1}$ is a continuous function in $\mathcal{C}(\text{Im }\varrho)$. 
We only need to prove that for any continuous function $F:\text{Im }\varrho\to\mathbb{C}$, we have 
\begin{equation}\label{main2prof}
\lim\limits_{T\to \infty}\frac{\sum\limits_j F(A_j[1],...,A_j[N-1])\omega_j(T)}{\sum\limits_j \omega_j(T)}=\int_{T_0/W}(F\circ\varrho)(x)\dd x.
\end{equation}
By the previous lemma, we have proved Equation \ref{main2prof} for $F$ being any monomial $(z_1,...,z_{N-1})\mapsto\prod\limits_{k=1}^{N-1} z_k^{i_k}\overline{z_k}^{i'_k}$. By linear combination, we prove Equation \ref{main2prof} for 
all polynomials in $z_1$, $\overline{z_1}$,..., $z_{N-1}$, $\overline{z_{N-1}}$ on $\mathbb{C}^{N-1}$. 
%$$(z_1,...,z_{N-1})\mapsto\sum\limits a_{i_1,i'_1,...,i_{N-1},i'_{N-1}}\prod\limits z_k^{i_k}\overline{z_k}^{i'_k}$$
By the Stone-Weierstrass theorem, such polynomials are dense in $\mathcal{C}(\text{Im }\varrho)$ under the topology of supremum norm. Apply the same epsilon-delta argument in the proof of Theorem \ref{main1} and we complete the proof.
\end{proof}
\begin{remark}
Theorem \ref{main2} essentially proves the Ramanujan conjecture in average with respect to varying $\phi_j$ and fixed $p$. On the left side of Equation \ref{main2eq}, $X_j(p)$ is only known to lie in $T_1/W$, while on the right side of the same equation, the Sato-Tate measure is supported on exactly $T_0/W$.
\end{remark}

\section{Case of N=3 and the Rate of Convergence}
Fix $N=3$ in this section and $\omega_j(T)$ is as defined in Theorem \ref{gk}.
By an application of the error terms obtained by \cite{blomer} in Theorem \ref{gk}, we will prove an effective version of Lemma \ref{mainlemma} for $N=3$, which estimates the rate of convergence of the limit in that lemma.

Let us recall the formal characters of a representation in the special case of the root system of type $A_2$. Characters of $\ssu{3}$ or $\ssl{3}{C}$ are generated by $e^{\aleph((0,1))}=e^{\epsilon_1-\frac{\epsilon_1+\epsilon_2+\epsilon_3}{3}}$
and
$e^{\aleph((1,0))}=e^{-\epsilon_3+\frac{\epsilon_1+\epsilon_2+\epsilon_3}{3}}$ over $\mathbb{Z}$ as rational functions.
We have $$\chi_1=e^{\epsilon_1-\frac{\epsilon_1+\epsilon_2+\epsilon_3}{3}}+e^{\epsilon_2-\frac{\epsilon_1+\epsilon_2+\epsilon_3}{3}}+e^{\epsilon_3-\frac{\epsilon_1+\epsilon_2+\epsilon_3}{3}}
\;\text{ and }\;
\chi_2=e^{-\epsilon_1+\frac{\epsilon_1+\epsilon_2+\epsilon_3}{3}}+e^{-\epsilon_2+\frac{\epsilon_1+\epsilon_2+\epsilon_3}{3}}+e^{-\epsilon_3+\frac{\epsilon_1+\epsilon_2+\epsilon_3}{3}}.
$$
for the two fundamental representation $V_1$ and $V_2=\wedge^2 V_1$.

\begin{theorem}[\textbf{Rate of convergence for N=3}] Let us fix $\epsilon>0$ and keep $\omega_j(T)$ as defined in Theorem \ref{gk}. 
Let $\theta\leq 7/64$ be a bound toward the Ramanujan conjecture on GL(2).
Let $f\circ \varrho$ be a monomial $(z_1,z_2)\mapsto z_1^{i_1}\overline{z_1}^{i'_1}z_2^{i_2}\overline{z_2}^{i'_2}$ for non-negative integers $i_1$, $i'_1$, $i_2$, $i'_2$. We have 
$$\frac{\sum\limits_{j=1}^\infty f(X_j(p))\omega_j(T)}{\sum\limits_{j=1}^\infty \omega_j(T)}=\int_{T_0/W}f(x)\dd x+ \error
$$ as $T\gg 1$ and $P=p^{i_1+i'_1+i_2+i'_2}$.
\end{theorem}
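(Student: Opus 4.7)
The plan is to reduce the asymptotic to Theorem \ref{gk} via the Casselman-Shalika formula and the representation theory of $\ssll{3}{C}$, following the skeleton of Lemma \ref{mainlemma} but keeping the error term explicit.

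First, using Proposition \ref{CS} to identify $A_j[k] = \chi_k(X_j(p))$, together with the unitaricity relation $\overline{A_j[1]} = A_j[2]$ (valid for $N=3$), I rewrite
$$f(X_j(p)) = A_j[1]^{i_1}\,\overline{A_j[1]}^{i'_1}\,A_j[2]^{i_2}\,\overline{A_j[2]}^{i'_2} = A_j[1]^{a}\,A_j[2]^{b},$$
where $a = i_1 + i'_2$, $b = i_2 + i'_1$, and $a+b = i_1+i'_1+i_2+i'_2$, so $P = p^{a+b}$. Decomposing $V_1^{\otimes a}\otimes V_2^{\otimes b}$ into irreducibles yields a character identity
$$\chi_1^a\,\chi_2^b = \sum_{\mu \in \Lambda \cap C} a_\mu\, \chi_\mu,$$
with non-negative integer multiplicities $a_\mu$. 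Each $\mu = c_1\omega_1 + c_2\omega_2$ appearing in this sum satisfies $c_1 + c_2 \leq a + b$, since the highest weight in the product is $a\omega_1 + b\omega_2$ and subtracting non-negative integer combinations of the simple roots $\alpha_1 = 2\omega_1 - \omega_2$, $\alpha_2 = -\omega_1 + 2\omega_2$ only decreases the sum of the fundamental-weight coordinates. Applying Proposition \ref{CS} a second time with $\aleph^{-1}(c_1\omega_1 + c_2\omega_2) = (c_2, c_1)$ then gives
$$f(X_j(p)) = \sum_{\mu} a_\mu\, A_j(p^{c_2}, p^{c_1}).$$

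Next I apply Theorem \ref{gk} term by term with $(m_1,m_2) = (p^{c_2}, p^{c_1})$ and $(n_1,n_2) = (1,1)$, so that $P_\mu := p^{c_1+c_2} \leq P$. The $\mu = \textbf{0}$ summand contributes the main term $a_{\textbf{0}}\sum_j \omega_j(T)$; by Schur orthogonality $\int_{T_0/W}\chi_\mu(x)\dd x = \delta_{\mu=\textbf{0}}$, this matches $\bigl(\int_{T_0/W} f(x)\dd x\bigr)\sum_j \omega_j(T)$, exactly as in the proof of Lemma \ref{mainlemma}. Each $\mu \neq \textbf{0}$ summand contributes an error $\mathcal{O}\bigl((T^2 P_\mu^{1/2} + T^3 P_\mu^\theta + P_\mu^{5/3})(TP_\mu)^\epsilon\bigr)$. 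Summing these bounds over $\mu$ and dividing by the Weyl-law denominator $\sum_j \omega_j(T) \asymp T^5$ (also supplied by Theorem \ref{gk}) will deliver the claimed error $\error$.

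The main point to check is the representation-theoretic bookkeeping needed to swallow the combinatorial factors into $P^\epsilon$. On the one hand, the weights $\mu = c_1\omega_1 + c_2\omega_2$ with $0 \leq c_1, c_2$ and $c_1+c_2 \leq a+b$ number at most $(a+b+1)^2 = O((\log P)^2) \ll P^\epsilon$. On the other hand, the total multiplicity satisfies $\sum_\mu a_\mu \dim V_\mu = \chi_1(I)^a\chi_2(I)^b = 3^{a+b}$, so each $a_\mu$ is bounded by $3^{a+b}$; in our setting, with $p$ and the exponents fixed and only $T\to\infty$, this is an absolute constant absorbable into the implicit $\mathcal{O}$-constant. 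Combined with the uniform bound $P_\mu \leq P$ on each summand of the error, the analysis collapses to a finite linear combination of instances of Theorem \ref{gk} plus the Schur orthogonality evaluation of the main term.
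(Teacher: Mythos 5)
Your proposal is correct and takes essentially the same route as the paper: Casselman--Shalika to write $f(X_j(p))=\chi_1^{i_1+i'_2}\chi_2^{i_2+i'_1}(X_j(p))$ via unitaricity, the decomposition $\chi_1^{i_1+i'_2}\chi_2^{i_2+i'_1}=\sum_\mu a_\mu\chi_\mu$, term-by-term application of Theorem \ref{gk} with Schur orthogonality giving the main term $a_{\textbf{0}}$, and division by the Weyl law $\sum_j\omega_j(T)\asymp T^5$. The only difference is bookkeeping for $\sum_\mu a_\mu p^{\alpha(l_1+l_2)}$: the paper bounds $a_\mu$ by the weight multiplicity and evaluates the character generating function at $e^{\aleph((1,0))}=e^{\aleph((0,1))}=p^\alpha$, obtaining $(p^\alpha+1+p^{-\alpha})^{i_1+i'_1+i_2+i'_2}=\mathcal{O}(P^\alpha)$, while you use the cruder bounds (number of constituents $\ll P^\epsilon$, $a_\mu\le 3^{i_1+i'_1+i_2+i'_2}$, $P_\mu\le P$), which is equally sufficient since the implied constant may depend on the exponents.
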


\begin{proof}
Let us recall $\Omega_3=\{(l_1,l_2):l_1,l_2\in\mathbb{Z},l_1\geq  0,l_2\geq 0\}$
and the map $\aleph:\Omega_3\to \Lambda\cap C$ defined by
$(l_1,l_2)\mapsto l_2(\epsilon_1-\frac{\epsilon_1+\epsilon_2+\epsilon_3}{3})+l_1(\frac{\epsilon_1+\epsilon_2+\epsilon_3}{3}-\epsilon_3)$.
We also recall  $V_1^{\otimes (i_1+i'_2)}\otimes V_2^{\otimes (i_2+i'_1)}=\bigoplus\limits_{\mu}V_\mu^{a_\mu}$ 
and $\chi_1^{i_1+i'_2}\chi_2^{i_2+i'_1}=\sum\limits_\mu a_\mu \chi_\mu$ in the proof of Lemma \ref{mainlemma}. 
We have

\begin{eqnarray*}
\frac{\sum\limits_j f(X_j(p))\omega_j(T)}{\sum\limits_j \omega_j(T)}-\int_{T_0/W}f(x)\dd x
&=&-a_\textbf{0}+ \frac{\sum\limits_j \prod\limits_{k=1}^{2}A_j[k]^{i_k}{\overline{ A_j[k]}}^{i'_k}\omega_j(T)}{\sum\limits_j \omega_j(T)}\\
&=& -a_\textbf{0}+ \frac{\sum\limits_j \prod\limits_{k=1}^{2}A_j[k]^{i_k}{A_j[3-k]}^{i'_k}\omega_j(T)}{\sum\limits_j \omega_j(T)}\\
&=& -a_\textbf{0}+ \frac{\sum\limits_j \chi_1^{i_1+i'_2}(X_j(p))\chi_2^{i_2+i'_1}(X_j(p))\omega_j(T)}{\sum\limits_j \omega_j(T)}\\
&=&-a_\textbf{0}+\frac{\sum\limits_j \sum\limits_\mu a_\mu\chi_\mu(X_j(p))\omega_j(T)}{\sum\limits_j \omega_j(T)}\\
&=&-a_\textbf{0}+\sum\limits_{\mu\in \Lambda\cap C} a_\mu \frac{\sum\limits_j  \chi_\mu(X_j(p))\omega_j(T)}{\sum\limits_j \omega_j(T)}\\
%&=&-a_\textbf{0}+\sum\limits_\mu a_\mu \frac{\sum\limits_j  A_j(p^{\aleph^{-1}(\mu)})\omega_j(T)}{\sum\limits_j \omega_j(T)}\\
%&=&-a_\textbf{0}+\sum\limits_\mu a_\mu (p^2)^{\text{pr}_1 (\aleph^{-1}(\mu))+\text{pr}_2( \aleph^{-1}(\mu))}\\
&=&-a_\textbf{0}+\sum\limits_{l_1\geq 0, l_2\geq 0} a_{\aleph((l_1,l_2))}\frac{\sum\limits_j  A_j(p^{l_1},p^{l_2})\omega_j(T)}{\sum\limits_j \omega_j(T)}\\
&=&\sum\limits_{l_1\geq 0, l_2\geq 0} a_{\aleph((l_1,l_2))}\left(\frac{\sum\limits_j  A_j(p^{l_1},p^{l_2})\omega_j(T)}{\sum\limits_j \omega_j(T)}-\delta_{l_1,0}\delta_{l_2,0}\right).
\end{eqnarray*}
From the error terms in Theorem \ref{gk},
we know that 
$({\sum\limits_j  A_j(p^{l_1},p^{l_2})\omega_j(T)}/{\sum\limits_j \omega_j(T)}-\delta_{l_1,0}\delta_{l_2,0})$
is up to $\mathcal{O}\left((T^2{(p^{l_1+l_2})}^{1/2}+T^3{(p^{l_1+l_2})}^\theta+{(p^{l_1+l_2})}^{5/3})T^{-5+\epsilon}{(p^{l_1+l_2})}^\epsilon\right)$. 
We need a good bound for $\sum\limits_{l_1\geq 0,l_2\geq 0}a_{\aleph((l_1,l_2))}p^{\alpha(l_1+l_2)}$ with $\alpha=\theta+\epsilon, 1/2+\epsilon, 5/3+\epsilon$.
Recall that $a_\mu$ is the multiplicity of $V_\mu$ in the decomposition of the representation
$$V_1^{\otimes(i_1+i'_2)}\otimes V_2^{\otimes(i'_1+i_2)}=\bigoplus_{\mu\in \Lambda\cap C} V_\mu^{a_\mu}.$$
Because of $$\text{dim}\{v\in V_\mu: t.v=\mu(t)v \text{ for all }t\in T_0\}=1$$
we have
\begin{eqnarray*}
a_\mu&\leq& \text{dim}\{v\in V_1^{\otimes(i_1+i'_2)}\otimes V_2^{\otimes(i'_1+i_2)} : t.v=\mu(t)v\text{ for all }t\in T_0\}\\
&=&\chi_1^{i_1+i'_2}\chi_2^{i_2+i'_1}\bigg|_{e^\mu},
\end{eqnarray*} 
where $\bigg|_{e^\mu}$
means taking the coefficient before $e^\mu$.
Hence we obtain

\begin{eqnarray*}
\sum\limits_{l_1\geq 0,l_2\geq 0}a_{\aleph((l_1,l_2))}p^{\alpha(l_1+l_2)}
&\leq& \sum\limits_{l_1\geq 0,l_2\geq 0} \left(\chi_1^{i_1+i'_2}\chi_2^{i_2+i'_1}\bigg|_{e^{\aleph((l_1,l_2))}}\right)p^{\alpha(l_1+l_2)}\\
&=&\sum\limits_{\mu\in \Lambda\cap C} \left(\chi_1^{i_1+i'_2}\chi_2^{i_2+i'_1}\bigg|_{e^\mu}\right)e^\mu \Bigg|\Bigg|_{e^{\aleph((1,0))}=e^{\aleph((0,1))}=p^\alpha}\\
&<&\sum\limits_{\mu\in\Lambda} \left(\chi_1^{i_1+i'_2}\chi_2^{i_2+i'_1}\bigg|_{e^\mu}\right)e^\mu \Bigg|\Bigg|_{e^{\aleph((1,0))}=e^{\aleph((0,1))}=p^\alpha}\\
&=&\chi_1^{i_1+i'_2}\chi_2^{i_2+i'_1} \Bigg|\Bigg|_{e^{\aleph((1,0))}=e^{\aleph((0,1))}=p^\alpha}\\
&=&(p^\alpha+1+p^{-\alpha})^{i_1+i'_1+i_2+i'_2}\\
&=&\mathcal{O}\left(p^{\alpha(i_1+i'_1+i_2+i'_2)}\right),
\end{eqnarray*}
where $\Bigg|\Bigg|_{e^{\aleph((1,0))}=e^{\aleph((0,1))}=p^\alpha}$ means replacing $e^{\aleph((1,0))}$ and $e^{\aleph((0,1))}$ with $p^\alpha$ in $\chi_1^{i_1+i'_2}\chi_2^{i_2+i'_1}$, which is a rational function generated by $e^{\aleph((1,0))}$ and $e^{\aleph((0,1))}$.
In continuation, we have
\begin{eqnarray*}
\frac{\sum\limits_j f(X_j(p))\omega_j(T)}{\sum\limits_j \omega_j(T)}&-&\int_{T_0/W}f(x)\dd x\\
&=&\sum\limits_{l_1\geq 0, l_2\geq 0} a_{\aleph((l_1,l_2))}\left(\frac{\sum\limits_j  A_j(p^{l_1},p^{l_2})\omega_j(T)}{\sum\limits_j \omega_j(T)}-\delta_{l_1,0}\delta_{l_2,0}\right)\\
&=&\sum\limits_{l_1\geq 0, l_2\geq 0} a_{\aleph((l_1,l_2))}\mathcal{O}\left((T^2{(p^{l_1+l_2})}^{1/2}+T^3{(p^{l_1+l_2})}^\theta+{(p^{l_1+l_2})}^{5/3})T^{-5+\epsilon}{(p^{l_1+l_2})}^\epsilon\right)\\
&=&\mathcal{O}\left((T^2P^{1/2}+T^3P^\theta+P^{5/3})T^{-5+\epsilon}P^\epsilon\right),
\end{eqnarray*}
with $P=p^{i_1+i'_1+i_2+i'_2}$.
Hence we complete the proof.
\end{proof}

\begin{small}
\section*{Acknowledgment}
The main theorem here appeared firstly at the author's PhD thesis, \cite{zhou}. 
This paper presents a more sophisticated formulation and a different proof.
The author is grateful to his thesis advisor, Dorian Goldfeld, who brought the author to this topic of research, gave the author much guidance, and painstakingly read the manuscript.
The author would like to thank Peter Sarnak for many valuable comments,
in particular, suggesting using Satake parameters instead of Hecke eigenvalues for equidistribution theorems in higher dimensions.
The author would like to thank the anonymous referee who made valuable comments.
The author would like to thank Hang Xue for helpful discussions.
\end{small}

\end{document}